\numberwithin{equation}{section}
\newtheorem{theorem}{Theorem}[section]
\newtheorem{proposition}[theorem]{Proposition}
\newtheorem{corollary}[theorem]{Corollary}
\theoremstyle{definition}
\theoremstyle{remark}
\newtheorem{remark}[theorem]{Remark}
\newcommand{\C}{\mathbb{C}}
\newcommand{\Sp}{\mathbb{S}}
\newcommand{\R}{\mathbb{R}}
\newcommand{\T}{\mathbb{T}}
\newcommand{\Z}{\mathbb{Z}}
\newcommand{\SU}{\operatorname{SU}}
\newcommand{\SL}{\operatorname{SL}}
\newcommand{\co}{\colon\thinspace}
\newcommand{\bs}{\boldsymbol}
\newcommand{\Hilb}{\operatorname{Hilb}}
\newcommand{\on}{\mathit{on}}
\begin{document}

\title{Hilbert series associated to symplectic quotients by $\operatorname{SU}_2$}

\author[H.-C.~Herbig]{Hans-Christian Herbig}
\address{Departamento de Matem\'{a}tica Aplicada,
Av. Athos da Silveira Ramos 149, Centro de Tecnologia - Bloco C, CEP: 21941-909 - Rio de Janeiro, Brazil}
\email{herbighc@gmail.com}

\author[D.~Herden]{Daniel Herden}
\address{Department of Mathematics, Baylor University,
One Bear Place \#97328,
Waco, TX 76798-7328, USA}
\email{Daniel\_Herden@baylor.edu}

\author[C.~Seaton]{Christopher Seaton}
\address{Department of Mathematics and Computer Science,
Rhodes College, 2000 N. Parkway, Memphis, TN 38112}
\email{seatonc@rhodes.edu}

\thanks{C.S. was supported by the E.C.~Ellett Professorship in Mathematics;
H.-C.H. was supported by CNPq through the \emph{Plataforma Integrada Carlos Chagas.}}

\keywords{Hilbert series, symplectic reduction, covariants, special unitary group, special linear group}
\subjclass[2010]{Primary 53D20; Secondary 13A50, 14L30, 05E05.}

\begin{abstract}
We compute the Hilbert series of the graded algebra of real regular functions on the symplectic quotient
associated to an $\SU_2$-module and give an explicit expression for the first nonzero coefficient of the
Laurent expansion of the Hilbert series at $t = 1$. Our expression for the Hilbert series indicates an
algorithm to compute it, and we give the output of this algorithm for representations of dimension at most $10$.
Along the way, we compute the Hilbert series of the module of covariants of an arbitrary
$\SL_2$- or $\SU_2$-module as well its first three Laurent coefficients.
\end{abstract}

\maketitle
\tableofcontents


\section{Introduction}
\label{sec:Intro}

Let $(M,\omega)$ be a smooth symplectic manifold with an action of a compact Lie group $G$ preserving the symplectic form $\omega$. Assume that the $G$-action admits a $G$-equivariant \emph{moment map} $J:M\to \mathfrak g^\ast$, where $\mathfrak g^\ast$ denotes the dual space of the Lie algebra $\mathfrak g$ of $G$. In other words, the infinitesimal
action of $\xi\in \mathfrak g$  is given by the Hamiltonian vector field $\{J_\xi, \:\}$ of the function $J_\xi$ obtained by pairing $J$ with $\xi$. If $0\in \mathfrak g^\ast$ is a regular value, the zero level $Z:=J^{-1}(0)$ is a closed submanifold of $M$ and the \emph{symplectic quotient} $M_0=Z/G$ (also known as Marsden-Weinstein quotient or Hamiltonian reduction) is a symplectic orbifold. Otherwise, as shown in \cite{SjamaarLerman}, $Z$ has locally conical singularities and $M_0=Z/G$ is a \emph{stratified symplectic space}. The strata $(M_0)_{(H)}$ of this stratification are indexed by conjugacy classes $(H)$ of isotropy subgroups $H\subseteq G$ and are given by the set of $G$-orbits of points whose isotropy group is in $(H)$. The components of $(M_0)_{(H)}$ are smooth symplectic  manifolds. The strata and their interrelationships can be recovered from
the Poisson algebra of smooth functions $\mathcal{C}^\infty(M_0)=\mathcal{C}^\infty(M)^G/\mathcal{I}_Z^G$, corresponding to the algebra of \emph{classical observables} of the system. Here $\mathcal{I}_Z^G$ denotes the $G$-invariant part of the ideal $\mathcal{I}_Z$ of smooth functions vanishing on $Z$.  It has been suggested to view $(M_0, \mathcal{C}^\infty(M_0))$ as a differential space in the sense of Sikorski (cf. \cite{Sniatycki, HuebschmannMemoirs, FarHerSea}). The authors adhere to this philosophy.

Symplectic quotients, or incarnations thereof, frequently arise as moduli spaces in gauge theory. This is due to the fact that the curvature of a connection can be interpreted as a moment map on the cotangent bundle of the space of connections (see e.g. \cite{AtiyahBott,DonaldsonKronheimer}). In many interesting situations, the moduli space is finite-dimensional but exhibits singularities. In this case, it can often be locally understood as a symplectic quotient in the sense defined above (see e.g. \cite{GHJW}). For example, the moduli space of flat $\operatorname{SU}_2$-connections on a Riemann surface has local models that are a symplectic reductions at zero angular momentum \cite{HuebschmannCleaningUp, CapeHerbigSeaton}. How to deal with the singularities is the subject of ongoing research, and the literature on the topic is extensive.

It is well known since \cite{ArmsGotayJennings,SjamaarLerman} that the symplectic slice theorem can be employed to study the singularities locally. It is therefore natural and convenient to focus our attention on the case when $(M,\omega)$ is  $(\mathbb C^n,\omega_0)$ with its standard
K\"ahler structure and the action of $G$ is a unitary representation. In this situation,
the moment map is given by homogeneous quadratic polynomials, see Equation~\eqref{def:quadraticmm}. Typically, invariant theory is used to describe the semialgebraic geometry of $M_0$ (see e.g. \cite{LMS,HuebschmannMemoirs,FarHerSea,CapeHerbigSeaton}). Closely related to $M_0$ is its real Zarisky closure
$\overline{M_0}^z$. It is the spectrum of the Noetherian $\mathbb R$-algebra
\[
    \mathbb R[M_0]=\mathbb R[\mathbb C^n]^G/(\mathcal{I}_Z\cap \mathbb R[\mathbb C^n]^G).
\]
This algebra comes with a canonical nonstandard $\mathbb Z^+$-grading and is a Poisson subalgebra of $\mathcal{C}^\infty (M_0)$. Versions of the complexification $\mathbb C[M_0]=\mathbb R[M_0]\otimes_\mathbb R\mathbb C$ have been intensely studied in geometric representation theory (see e.g. \cite{NakajimaDuke, GinzburgNakajima, CrawleyBoeveyNormality}). For a more careful discussion of the subtleties of the definition of the complex symplectic quotient and the relation to the \emph{2-large} property, see \cite{HerbigSchwarzSeaton2}.

A fundamental question related to symplectic quotients is the \emph{symplectomorphism problem}, i.e. classifying the singularities that arise in symplectic quotients up to symplectomorphism.
The question of when a symplectic quotient is (graded regularly) symplectomorphic to a finite unitary quotient (i.e. a symplectic quotient with $G$ a finite group) has been proposed in \cite{FarHerSea} and, to some extent, answered in \cite{HerbigSchwarzSeaton}. Considering the symplectomorphism problem the following basic questions arise:
\begin{enumerate}[(A)]
\item What are the general traits that all symplectic quotients have in common?
\item What are appropriate invariants suitable to distinguish them up to symplectomorphism?
\end{enumerate}
In progress towards answering both of these questions, the Hilbert series of the graded algebra of regular functions $\R[M_0]$ has been instrumental. For a general result addressing question
(A), we refer the reader to \cite{HerbigSchwarzSeaton2} where it has been shown that symplectic quotients have symplectic singularities. In \cite{HerbigSeaton} the Hilbert series of the algebra $\R[M_0]$ of symplectic circle quotients $M_0$ has been examined, and implications for the graded regular symplectomorphism problem have been discussed. We emphasize that the Hilbert series is, in general, not invariant under regular symplectomorphism. Its main virtues are its accessiblility and that it encodes valuable information about the variety $\overline{M_0}^z$ (e.g. dimension, $a$-invariant, Gorensteinness). We mention that in recent years, Hilbert series calculations have gained considerable attention by people working in supersymmetric Yang-Mills theory as part of the \emph{plethystic program} (see e.g. \cite{Hanany}).

In this paper, we study the Hilbert series of the algebra of regular functions
$\R[M_0]$ of the symplectic quotient of a (finite-dimensional) unitary representation $V$ of the group $G=\SU_2$. Let $r>1$ be an integer and let $\Gamma:=(\mathbb Z^+)^r$
be the semigroup of multi-indices $\bs n=(n_1,n_2,\dots,n_r)$.
Recall that the \emph{Hilbert series}
of a $\Gamma$-graded vector space $X=\bigoplus_{\bs n\in \Gamma} X_{\bs n}$ with $X_{\bs n}$ finite-dimensional for all $\bs n\in \Gamma$ is the generating function
\begin{align*}
    \Hilb_X (\bs{t})    &   =\Hilb_X (t_1,t_2,\dots,t_r):=\sum_{\bs n\in \Gamma} \dim( X_{\bs n})\: \bs t^{\bs n}
        \\&=    \sum_{n_1,n_2,\dots,n_r=0}^\infty \dim( X_{(n_1,n_2,\dots,n_r)})\:
            t_1^{n_1}t_2^{n_2}\cdots t_r^{n_r}\in\mathbb Q[\![\bs t]\!]=\mathbb Q[\![ t_1,t_2,\dots,t_r]\!].
\end{align*}
If $r=1$, we refer to $\Hilb_X (\bs{t})$ as a \emph{univariate Hilbert series} and set $t:=t_1$; otherwise we say it is \emph{multivariate}. A univariate Hilbert series corresponding to grading by total degree can be determined from a multivariate Hilbert series $\Hilb_X (\bs t)$ by specialization $\Hilb_X (t)=\Hilb_X (t,t,\dots,t)$.

The Hilbert series of primary interest for us is the univariate Hilbert series
\[
    \Hilb_{(G,V)}^{on}(t):=\Hilb_{\R[M_0]}(t)=\sum_{n=0}^\infty \dim(\R[M_0]_n)\: t^n.
\]
We refer to it as the \emph{on-shell Hilbert series}. If the group $G$ is understood we will write simply $\Hilb_{V}^{on}(t)$. It is known \cite{HerbigSchwarzSeaton} that $\R[M_0]$ is a graded Gorenstein algebra if the complexified representation $(G_\C,V)$ is 2-large (this assumption is not necessary if $G=\SU_2$ or the a torus).
It then follows that $\Hilb_{(G,V)}^{on}(t)$ is rational and satisfies the functional equation $\Hilb_{(G,V)}^{on}(t^{-1})=(-t)^d\Hilb_{(G,V)}^{on}(t)$ where $d$ is the Krull dimension of $\overline{M_0}^z$.
In order to understand $\Hilb_{(\SU_2,V)}^{on}(t)$, we employ the univariate Hilbert series $\Hilb_{\C[V\oplus V^\ast]^{\SL_2}}(t)$ of the polynomial $\SL_2 = (\SU_2)_\C$-invariants of the
cotangent lifted representation $V\oplus V^\ast$.

Here we can use the previous work \cite{CayresPintoHerbigHerdenSeaton} on the univariate Hilbert series
\[
    \Hilb_{\C[W]^{\SL_2}}(t)=\sum_{n=1}^\infty \dim(\C[W]^{\SL_2}_n) \:t^n
\]
of polynomial invariants of a finite-dimensional $\SL_2$-representation $W$. In the Laurent expansion
\[
    \Hilb_{\C[W]^{\SL_2}}(t)=\sum_{n=0}^\infty \gamma_n(W) (1-t)^{n-D+3},
\]
the pole order equals $D-3$ (where $D=\dim_\C W$) if $(\SL_2,W)$ if $W$ is $1$-large. Moreover, formulas in terms of Schur functions for the coefficients $\gamma_0,\gamma_1$ and $\gamma_2$ have been elaborated and an algorithm computing $\Hilb_{\C[W]^{\SL_2}}(t)$ has been presented and implemented.
Many ideas in \cite{CayresPintoHerbigHerdenSeaton} are adaptations of developments in \cite{HerbigSeaton}, where $\Hilb_{(G,V)}^{on}(t)$ in the case of $G=\Sp^1$ has been examined. Similar considerations have also been used in \cite{CowieHerbigSeatonHerden} to examine $\Hilb_{\C[W]^{\C^{\times}}}(t)$ for a linear representation $W$ of the complex circle $\C^{\times}$.

If $G$ is an $\ell$-dimensional torus $\mathbb T^\ell=(\Sp^1)^\ell$, the complexification
$G_{\C}$ is $\mathbb T^\ell_{\C}=(\C^\times)^\ell$. It can be assumed without loss of generality that the representation $(\mathbb T^\ell_{\C} ,V)$ is stable and faithful. With this assumption, we have the simple relationship
\[
    \Hilb_{(G,\mathbb T^\ell)}^{on}(t)=(1-t^2)^\ell\Hilb_{\C[V\oplus V^\ast]^{\mathbb T^\ell_{\C}}}(t).
\]
If the group $G$ is nonabelian, the relationship is more complicated. We instead systematize the method of \cite[Subsection 6.3]{HerbigSchwarzSeaton2}, i.e. we take the $\SU_2$-invariant part of the Koszul complex on the moment map.  Recall that under mild assumptions (more precisely if $(G_{\C},V)$ is 1-large), the Koszul complex is a resolution of the algebra of on-shell function \cite{HerbigSchwarz}.
This enables us to express $\Hilb_{(\SU_2,V)}^{on}(t)$ as a linear combination of $\Hilb_{\C[V\oplus V^\ast]^{\SL_2}}(t)$ and $\Hilb_{(\C[V\oplus V^\ast]\otimes V_2)^{\SL_2}}(t)$ (see Proposition~\ref{prop:OnShellHilb}). Here $V_2$ is the irreducible $\SL_2$-representation given by binary forms of degree 2 (or in physics terminology, spin 1). The space $(\C[V\oplus V^\ast]\otimes V_2)^{\SL_2}$ can be interpreted as the module of $V_2$-covariants, i.e. the space of ${\SL_2}$-equivariant linear maps from $\C[V\oplus V^\ast]$ to $V_2$.

The missing piece is the Hilbert series $\Hilb_{(\C[W]\otimes V_L)^{\SL_2}}(t)$ of the module of covariants, and
Section~\ref{sec:Covariants} is dedicated to elaborating formulas for it. We take the liberty of treating instead of binary forms $V_2$ of degree two the more general case of  binary forms $V_L$ of degree $L$. If $W$ can decomposed into irreducibles $W=V_{d_1}\oplus V_{d_2}\oplus \cdots \oplus V_{d_r}$, the space $(\C[W]\otimes V_L)^{\SL_2}$
carries a natural $(\mathbb Z^+)^r$-grading (see for example \cite{BroerCM}). This makes it possible to refine the analysis and look into the multivariate Hilbert series of the covariants
\begin{equation}
\label{def:multivarateHilbert}
    \Hilb_{(\C[W]\otimes V_L)^{\SL_2}}(t_1,t_2,\dots,t_r)
        =   \sum_{n_1,n_2,\dots,n_r=0}^\infty \dim_{\C}(\C[W]\otimes V_L)^{\SL_2}_{(n_1,n_2,\dots,n_3)}
            \: t_1^{n_1}t_2^{n_2}\dots t_r^{n_r}.
\end{equation}
The first preparatory result is Proposition~\ref{prop:CovarMultivarHilbSer}
that presents the multivariate Hilbert series in terms if sums over roots of unity and analytic continuation. Similar formulas have been found by Brion \cite{Brion} for the algebra of $\SL_2$-invariant polynomial functions. For algorithms concerning the multivariate and unvariate Hilbert series and other approaches to the computation of the Hilbert series
of covariants in some cases, see
\cite{BedratyukSL2MultigradPoincare,BedratyukPoincareCovariants,BedratyukBivarPoincare,BedratyukIlashCovariants}.
In addition, Broer has considered the computation of the Hilbert series for $\SL_2$-covariants, see
\cite{BroerNewMethod,BroerCovariants}.

In Subsection~\ref{subsec:CovarHilbSerUnivar} we specialize to the univariate Hilbert series  $\Hilb_{(\C[W]\otimes V_L)^{\SL_2}}(t)$ and deduce a formula for it in Theorem~\ref{thrm:CovarUnivarHilbSer}. As a first consequence,
we describe an algorithm that we have implemented to compute the covariants.
Along with Proposition~\ref{prop:OnShellHilb}, this algorithm enables the computation of $\Hilb_{(\SU_2,V)}^{on}(t)$,
and we have produced a list of examples of in Table~\ref{tab:HilbM0LowDim} of Appendix~\ref{app:HSerData}.
In Subsection~\ref{subsec:CovarHilbSerLaurent} we use Theorem~\ref{thrm:CovarUnivarHilbSer} to deduce formulas
for the first three coefficients $\gamma_{0,L}$, $\gamma_{1,L}$ and $\gamma_{2,L}$ in the Laurent expansion of $\Hilb_{(\C[W]\otimes V_L)^{\SL_2}}(t)$, see Theorems~\ref{thrm:CovarGamma0}, \ref{thrm:CovarGamma1} and \ref{thrm:CovarGamma2}.

Finally, in Section~\ref{sec:SympQuots} we are able to derive our main result, Theorem~\ref{thrm:OnShellHilbGam0}, that
expresses the leading coefficient  $\gamma_0^{on}(V)$ of the Laurent expansion
of $\Hilb_{(\SU_2,V)}^{on}(t)$ in terms of the weights of the representation using Schur functions. The main difficulty in achieving this result is that there occur certain cancelations in the leading terms of the expansion;
see Equation~\eqref{eq:OnShellExpanN1N2} and the discussion that follows.
For this reason, we are forced to calculate the first three terms in the Laurent expansions of $\Hilb_{(\C[W]\otimes V_2)^{\SL_2}}(t)$ and $\Hilb_{\C[V\oplus V^\ast]^{\SL_2}}(t)$. The latter calculation was done in the previous work \cite{CayresPintoHerbigHerdenSeaton}.
From the graded Gorensteinness of $\R[M_0]$ and the Gorensteinnes of $\C[V\oplus V^\ast]^{\SL_2}$ we deduce further relations among the Laurent coefficients. For the effects of Gorensteinness on the Laurent expansion of the Hilbert series see our previous article \cite{HerbigHerdenSeaton2}.

We draw some empirical conclusions and suggest that $\gamma_0^{on}(V)$ can be seen as a measure of the degree of reducibility of the representation $V$.
In Appendix~\ref{app:HSerData}, we list $\Hilb_{(\SU_2,V)}^{on}(t)$, $\gamma_0^{on}$ and $\gamma_2^{on}$ for $\dim_{\R} M_0$ between $2$ and $14$ (note that graded Gorensteinness implies $\gamma_1^{\on} = 0$ by \cite{HerbigHerdenSeaton}).
Data for higher dimensional cases are available from the authors by request. In Appendix~\ref{app:Gam0Plot} we provide
visualizations of the behavior of $\gamma_0^{on}$ for $\dim_{\R} M_0$ between $2$ and $38$.

\section*{Acknowledgements}

We would like to thank Gerald Schwarz for teaching us the method used to compute $\Hilb_V^{\on}(t)$
via the Koszul complex, which was first applied in \cite[Section 6.3]{HerbigSeaton2}.
Seaton thanks the E.C.~Ellett Professorship in Mathematics and Herbig thanks CNPq for financial support.


\section{Background}
\label{sec:Background}

Let $G$ be a compact Lie group and $V$ a unitary $G$-module. We briefly recall the
definition of the corresponding symplectic quotient $M_0$ and its graded Poisson
algebra of regular functions $\R[M_0]$. The reader is referred to
\cite[Sections 2.1 and 4]{FarHerSea} or \cite[Section 2.2]{HerbigSchwarzSeaton2}
for more details.

The moment map associated to $(G,V)$ is the regular quadratic map
\begin{align}\label{def:quadraticmm}
    J\co V\to \mathfrak{g}^\ast,
    \quad\quad
    v\mapsto (J(v), \xi)
        :=   \frac{\sqrt{-1}}{2} \langle v, \xi.v \rangle,
\end{align}
where $(J(v), \xi)$ denotes the dual pairing of $J(v)\in\mathfrak{g}^\ast$
with $\xi\in\mathfrak{g}$, $\langle\cdot,\cdot\rangle$ denotes the Hermitian
scalar product, and $\xi.v$ denotes the infinitesimal action. We let $Z$
denote the $G$-invariant set $J^{-1}(0)\subset V$, informally called the
\emph{shell}, and let $M_0 := Z/G$ denote the symplectic quotient. When $G$ is not
discrete, $0\in\mathfrak{g}^\ast$ is not a regular value of $J$ so that $Z$ is
not a manifold. Instead, $M_0$ is stratified into symplectic manifolds and hence
has the structure of a \emph{stratified symplectic space} by \cite{SjamaarLerman}.
Letting $\mathcal{C}^\infty(V)$ denote the
algebra of real-valued smooth functions on $V$, $\mathcal{I}_Z$ denote the ideal of
$\mathcal{C}^\infty(V)$ consisting of those functions that vanish on $Z$, and
$\mathcal{I}_Z^G := \mathcal{C}^\infty(V)^G\cap\mathcal{I}_Z$ the invariant part of
$\mathcal{I}_Z$, the \emph{smooth structure} of $M_0$ is defined to be the quotient
\[
    \mathcal{C}^\infty(M_0) := \mathcal{C}^\infty(V)^G/\mathcal{I}_Z^G.
\]
The algebra $\mathcal{C}^\infty(M_0)$ inherits a Poisson structure from the usual
Poisson bracket on $\mathcal{C}^\infty(V)$, and $M_0$ along with this Poisson algebra
has the structure of a \emph{Poisson differential space}, see \cite[Section 4.1]{FarHerSea}.

The \emph{Poisson algebra of regular functions on $M_0$}, $\R[M_0] = \R[V]^G/(\mathcal I_Z\cap \R[V]^G)$, is a Poisson subalgebra of $\mathcal{C}^\infty(M_0)$. It is a $\Z^+$-graded Poisson algebra with Poisson bracket being of degree $-2$.
In the language of \cite[Section 4.2]{FarHerSea}, $(\R[M_0],\{\:,\:\})$ together with the fundamental
polynomial invariants provide a \emph{global chart} for the Poisson differential space $M_0$.
We let $\Hilb_{\R[V]^G}(t)$ denote the Hilbert series of the graded algebra of real
$G$-invariant polynomials and $\Hilb_{(G,V)}^{\on}(t)$ (or simply $\Hilb_{V}^{\on}(t)$
when $G$ is clear from the context) denote the Hilbert series of the graded algebra
$\R[M_0]$ of ``on-shell" invariants, i.e. real regular functions on $M_0$.

Although our primary interest is the real symplectic quotient $M_0$ by the compact Lie group
$G$ and hence the real polynomial invariants, it is often helpful to consider the
corresponding complexifications. The action of $G$ on $V$ extends to an action of
the complexification $G_\C$ on $V$. If $(V,G_\C)$ is \emph{$1$-large}
(see \cite{GWSlifting}, \cite{HerbigSchwarz}, or \cite{HerbigSchwarzSeaton2} for the
definition), then by \cite[Corollary 4.3]{HerbigSchwarz}, the quadratic components of
$J$ generate a homogeneous real ideal $(J)$ of $\R[V]$ so that $\R[M_0] = \R[V]^G/(J)^G$.
Complexifying the underlying real vector space of $V$
yields $V\otimes_\R\C$, which is isomorphic to $V \oplus V^\ast$ as a $G$-module
(or equivalently a $G_\C$-module); we refer to $V \oplus V^\ast$ as the
\emph{cotangent lift} of $V$. We let
$\mu = J\otimes_\R\C\co V\oplus V^\ast \to \mathfrak{g}_\C^\ast$ denote the
complexification of the moment map, where $\mathfrak{g}_\C^\ast$ is the Lie
algebra of $G_\C$, and refer to the subscheme of $V \oplus V^\ast$ associated to
the ideal $(\mu)$ as the \emph{complex shell}. The associated \emph{complex
symplectic quotient} is defined to be
$\operatorname{Spec}\big(\R[M_0]\otimes_\R \C\big)$, which in the case of
$1$-large $(V, G_\C)$ coincides with the spectrum of
$\C[V\oplus V^\ast]^{G_\C}/(\mu)^{G_\C}$.
In particular, note that the Hilbert series of the graded algebras $\R[M_0]$ and
$\C[V\oplus V^\ast]^{G_\C}/(\mu)^{G_\C}$ coincide in the $1$-large case.
In non-$1$-large cases, this definition of the complex quotient is not standard;
see \cite[Section 2.2]{HerbigSchwarzSeaton2} for a thorough discussion.

For the rest of the paper, we specialize to the case where $G = \SU_2$ so that $G_\C = \SL_2$.
Let $V$ be a finite-dimensional unitary $\SU_2$-module, or equivalently a
finite-dimensional $\SL_2$-module, and assume for simplicity that $V^{\SU_2} = \{0\}$.
We use $V_d$ to denote the unique irreducible $\SU_2$-module of
dimension $d+1$ on binary forms of degree $d$. We assume that $(V,\SL_2)$ is $1$-large,
which by \cite[Theorem 3.4]{HerbigSchwarz} is true for all $V$ not isomorphic to
$V_1$, $2V_1$, nor $V_2$. Then by \cite[Lemma 2.1(2)]{HerbigSchwarz},
the components of $\mu$ form a regular sequence in $\C[V\oplus V^\ast]$ so that
the Koszul complex of $\mu$ is a free resolution of $\C[V\oplus V^\ast]/(\mu)$,
see \cite[Corollary 1.6.14(b)]{BrunsHerzog}. Let $S:=\C[V\oplus V^\ast]$, and then
we have an exact sequence
\[
    0
    \longrightarrow     S \simeq S\otimes \wedge^3 \mathfrak{sl}_2
    \longrightarrow     S\otimes \wedge^2 \mathfrak{sl}_2
    \longrightarrow     S\otimes \mathfrak{sl}_2
    \longrightarrow     S
    \longrightarrow     S/\mathfrak{sl}_2
    \longrightarrow     0,
\]
where the elements of $\mathfrak{sl}_2$ are in degree $2$,
the elements of $\wedge^2\mathfrak{sl}_2$ are in degree $4$, and
the elements of $\wedge^3 \mathfrak{sl}_2$ are in degree $6$.
As $\mathfrak{sl}_2$ and $\wedge^2\mathfrak{sl}_2$ are isomorphic as $\SL_2$-
(or $\SU_2$-)modules, this exact sequence can be rewritten as
\[
    0
    \longrightarrow     S
    \longrightarrow     S\otimes \mathfrak{sl}_2
    \longrightarrow     S\otimes \mathfrak{sl}_2
    \longrightarrow     S
    \longrightarrow     S/\mathfrak{sl}_2
    \longrightarrow     0.
\]
Taking invariants, we have
\[
    0
    \longrightarrow     S^{\SL_2}
    \longrightarrow     (S\otimes \mathfrak{sl}_2)^{\SL_2}
    \longrightarrow     (S\otimes \mathfrak{sl}_2)^{\SL_2}
    \longrightarrow     S^{\SL_2}
    \longrightarrow     (S/\mathfrak{sl}_2)^{\SL_2}
    \longrightarrow     0.
\]
Now, $S^{\SL_2} = \C[V\oplus V^\ast]^{\SL_2} = \C[V\oplus V^\ast]^{\SU_2}$,
and as $\mathfrak{sl}_2\simeq V_2$ as $\SL_2$- or $\SU_2$-modules,
$(S\otimes \mathfrak{sl}_2)^{\SL_2}$ is isomorphic to the module of covariants
$(\C[V\oplus V^\ast]\otimes V_2)^{\SL_2} = (\C[V\oplus V^\ast]\otimes V_2)^{\SU_2}$.
Finally, $(S/\mathfrak{sl}_2)^{\SL_2} = \R[M_0]\otimes_\R \C$. Therefore, letting
$\Hilb_{(\C[V\oplus V^\ast]\otimes V_2)^{\SL_2}}(t)$ denote the Hilbert series of the
module of covariants, we have the following.

\begin{proposition}
\label{prop:OnShellHilb}
Let $V$ be a unitary $\SU_2$-representation with $V^{\SU_2} = \{0\}$ and assume that $V$ is not isomorphic
to $V_1$, $2V_1$ nor $V_1\oplus V_2$. Then the on-shell Hilbert series of the graded algebra of regular functions
on the symplectic quotient $M_0$ is given by
\begin{equation}
\label{eq:OnShellHilb}
    \Hilb_V^{\on}(t) =
    (1 - t^6) \Hilb_{\C[V\oplus V^\ast]^{\SL_2}}(t) + (t^4 - t^2) \Hilb_{(\C[V\oplus V^\ast]\otimes V_2)^{\SL_2}}(t).
\end{equation}
\end{proposition}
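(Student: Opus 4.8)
The plan is to obtain \eqref{eq:OnShellHilb} as an Euler characteristic identity extracted from the $\SL_2$-invariant Koszul resolution assembled immediately above the statement. Essentially all of the structural input is already in place: $1$-largeness of $(V,\SL_2)$ makes the components of $\mu$ a regular sequence, so the Koszul complex resolves $S/(\mu)$; the identifications $\mathfrak{sl}_2\cong\wedge^2\mathfrak{sl}_2\cong V_2$ and $\wedge^3\mathfrak{sl}_2\cong\C$ collapse the resolution to the five-term complex displayed before the statement; and taking invariants yields an exact five-term sequence of graded vector spaces. What remains is to read off the Hilbert series of each term, with the correct internal degree shifts, and add them with alternating signs.

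First I would justify that invariants may be taken termwise without destroying exactness: since $\SL_2$ is linearly reductive, the Reynolds operator splits the invariants off every rational module, so $(-)^{\SL_2}$ is exact and carries the Koszul resolution to an exact sequence in each internal degree. Consequently the alternating sum of the dimensions of the five invariant terms vanishes in every degree, which is the same as saying that the alternating sum of their Hilbert series is zero.

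The only delicate bookkeeping is the grading. Because $\mu$ is homogeneous of degree $2$, each Koszul generator drawn from $\mathfrak{sl}_2$ carries internal degree $2$, so the $k$-th exterior power contributes an overall factor $t^{2k}$. Combining this with the module identifications, the terms in homological degrees $3,2,1,0$ have invariant Hilbert series $t^{6}\Hilb_{\C[V\oplus V^\ast]^{\SL_2}}(t)$, $t^{4}\Hilb_{(\C[V\oplus V^\ast]\otimes V_2)^{\SL_2}}(t)$, $t^{2}\Hilb_{(\C[V\oplus V^\ast]\otimes V_2)^{\SL_2}}(t)$ and $\Hilb_{\C[V\oplus V^\ast]^{\SL_2}}(t)$, while the cokernel $(S/(\mu))^{\SL_2}\cong\R[M_0]\otimes_\R\C$ has Hilbert series $\Hilb_V^{\on}(t)$.

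Finally I would impose the vanishing alternating sum and solve for the on-shell series, grouping the two copies of $\Hilb_{\C[V\oplus V^\ast]^{\SL_2}}(t)$ and the two copies of the covariant series:
\[
    \Hilb_V^{\on}(t) = (1-t^6)\,\Hilb_{\C[V\oplus V^\ast]^{\SL_2}}(t) + (t^4 - t^2)\,\Hilb_{(\C[V\oplus V^\ast]\otimes V_2)^{\SL_2}}(t),
\]
which is exactly \eqref{eq:OnShellHilb}. I do not expect a genuine obstacle here, as the hard analytic and homological content is imported from the cited results; the one place an error could enter is the pairing of each homological degree with its sign and power of $t$ in the alternating sum, so I would verify the parity of each of the four resolution terms explicitly before collecting.
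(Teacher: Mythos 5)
Your proposal is correct and follows essentially the same route as the paper: the paper's own argument (given in the discussion preceding the proposition) is exactly this invariant Koszul-complex computation, using $1$-largeness to get the regular sequence, the identifications $\mathfrak{sl}_2\cong\wedge^2\mathfrak{sl}_2\cong V_2$ and $\wedge^3\mathfrak{sl}_2\cong\C$ with internal degrees $2,4,6$, exactness of $(-)^{\SL_2}$ by reductivity, and the alternating sum of Hilbert series. Your sign and degree bookkeeping matches the paper's, so there is nothing to add.
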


\begin{remark}
\label{rem:Non1Large}
Note that in each of the non-$1$-large cases, the symplectic quotients are isomorphic to
quotients by finite groups, see \cite[Section 5]{HerbigSchwarzSeaton}
or \cite{ArmsGotayJennings,GotayBos}, and hence easily handled individually.
Specifically, the symplectic quotient associated to $V_1$ is a point, while the
(real) symplectic quotients associated to $2V_1$ and $V_2$ are both isomorphic to
$\C/\pm 1$.
\end{remark}

In Section~\ref{sec:Covariants}, we will compute the Hilbert series
$\Hilb_{(\C[V\oplus V^\ast]\otimes W)^{\SL_2}}(t)$ of covariants in general as well as the
first few Laurent coefficients of this series; note that the case of invariants, i.e. $W = \C$,
was treated in \cite{CayresPintoHerbigHerdenSeaton}. In Section~\ref{sec:SympQuots}, we will
apply these results to a computation of the first nonzero Laurent coefficient of
$\Hilb_V^{\on}(t)$.


\section{The Hilbert Series of the Module of Covariants}
\label{sec:Covariants}

Let $W$ and $U$ be finite-dimensional unitary $\SU_2$-modules, equivalently finite-dimensional
$\SL_2$-modules. In this section, we compute the Hilbert series $\Hilb_{(\C[W]\otimes U)^{\SL_2}}(t)$
of the module of covariants $W \to U$ as well as the first three Laurent coefficients of
$\Hilb_{(\C[W]\otimes U)^{\SL_2}}(t)$ at $t=1$.

We use the following notation from \cite{CayresPintoHerbigHerdenSeaton}.
Let $W = \bigoplus_{k=1}^r V_{d_k}$ indicate the decomposition of $W$ into irreducible representations,
where $V_{d}$ denotes the unique irreducible representation of $\SL_2$ of dimension $d+1$, given by
binary forms of degree $d$. Let $D = \dim_{\C} W = r + \sum_{k=1}^r d_k$. Define
\[
    \Theta  :=      \{(k,i)\in \Z\times\Z : 1 \leq k \leq r, \; 0 \leq  i \leq d_k \},
\]
and for each $(k,i)\in\Theta$, let $a_{k,i} := 2i - d_k$. Set
$\bs{a}_{\Theta} := \big(a_{k,i} : (k,i)\in\Theta\big)$, and then the coordinates of $\bs{a}_{\Theta}$
are the weights of the representation $W$. Similarly, let
\[
    \Lambda :=      \{(k,i)\in \Z\times\Z : 1 \leq k \leq r, \; \lfloor d_k/2 \rfloor + 1 \leq  i \leq d_k \},
\]
let $C:=|\Lambda|$, and let $\bs{a} := \big(a_{k,i} : (k,i)\in\Lambda\big)$; then $\bs{a}$ lists the
positive weights. When the representation $W$ is not clear from the context, we will use
the notation $\Theta_W$, $\Lambda_W$, $\bs{a}_W$, etc.

The Hilbert series $\Hilb_{(\C[W]\otimes U)^{\SL_2}}(t)$ can be expressed as an integral over the Cartan torus
$\T$ of $\SL_2$ using the Molien-Weyl formula; see \cite{BroerCovariants,BroerNewMethod}. That is,
\[
    \Hilb_{(\C[W]\otimes U)^{\SL_2}}(t)
    =   \int\limits_{z\in\T} \frac{\chi_{U}(z) \, d\mu(z)}
        {\det_{W^\ast}(1 - z t)},
        \quad\quad\quad |t| < 1,
\]
where $\chi_U$ is the character associated to $U$ and
$\mu(z)$ is a Haar measure on $\SU_2$ such that $\int_{\T} d\mu = 1$. By a minor modification
of the proof to the Molien-Weyl formula, the multivariate Hilbert series can be expressed as
\[
    \Hilb_{(\C[W]\otimes U)^{\SL_2}}(t_1,\ldots,t_r)
    =   \int\limits_{z\in\T} \frac{\chi_{U}(z) \, d\mu(z)}
        {\prod\limits_{k=1}^r \det_{V_k^\ast}(1 - z t_k)},
        \quad\quad\quad |t_k| < 1.
\]
This has been used in \cite[Equation (13)]{StanleyInvarFinGrp} for finite groups
and \cite[Section IV]{Forger} for the decomposition of a real representation into holomorphic
and antiholomorphic parts. It is an immediate consequence that if $U = U_1\oplus U_2$, we have
\[
    \Hilb_{(\C[W]\otimes U)^{\SL_2}}(t)
        =   \Hilb_{(\C[W]\otimes U_1)^{\SL_2}}(t) + \Hilb_{(\C[W]\otimes U_2)^{\SL_2}}(t),
\]
and the same holds for the multivariate Hilbert series $\Hilb_{(\C[W]\otimes U)^{\SL_2}}(t_1,\ldots,t_r)$;
for this reason, we restrict our attention to the case $U = V_L$ is irreducible with no loss of generality.

Identifying $\T$ with the circle $\Sp^1 \subset \C$, we have
\[
    d\mu = \frac{(1 - z^2)\,dz}{2\pi \sqrt{-1} }.
\]
Then the character $\chi_{V_L}(z)$ is given by
\[
    \chi_{V_L}(z) = z^L + z^{L-2} + \cdots + z^{2-L} + z^{-L},
\]
so that, noting that $W = W^\ast$ in this case, we have
\[
    \int\limits_{z\in\Sp^1} \frac{\chi_{V_L}(z) \, d\mu(z)}
        {\det_W(1 - z t)}
    =
    \frac{1}{2\pi\sqrt{-1}}
    \int\limits_{z\in\Sp^1} \frac{ z^{-L} - z^{L+2} }
        {z \det_W(1 - z t)}\,dz.
\]
We define
\[
    \Upsilon_{W,\ell}(t)    :=
    \frac{1}{2\pi\sqrt{-1}}
    \int\limits_{z\in\Sp^1} \frac{ z^\ell dz}
        {z\det_W(1 - z t)}
\]
and then $\Hilb_{(\C[W]\otimes V_L)^{\SL_2}}(t) = \Upsilon_{W,-L}(t) - \Upsilon_{W, L+2}(t)$;
similarly,
\[
    \Upsilon_{W,\ell}(t_1,\ldots,t_r)     :=
    \frac{1}{2\pi\sqrt{-1}}
    \int\limits_{z\in\Sp^1} \frac{ z^\ell dz}
        {z\prod\limits_{k=1} \det_{V_k^\ast}(1 - z t_k)},
\]
and $\Hilb_{(\C[W]\otimes V_L)^{\SL_2}}(t_1,\ldots,t_r) = \Upsilon_{W,-L}(t_1,\ldots,t_r) - \Upsilon_{W,L+2}(t_1,\ldots,t_r)$.
Using the notation established above, we can express
\begin{equation}
\label{eq:GenIntegral}
    \Upsilon_{W,\ell}(t)
    =
    \frac{1}{2\pi\sqrt{-1}}\int\limits_{\Sp^1}
        \frac{ z^{\ell-1} \, dz}
        {\prod\limits_{d_k\in 2\Z}(1 - t) \prod\limits_{(k,i)\in\Lambda}
            (1 - t z^{-a_{k,i}})(1 - t z^{a_{k,i}})}
\end{equation}
and
\begin{equation}
\label{eq:GenIntegralMultivar}
    \Upsilon_{W,\ell}(t_1,\ldots,t_r)
    =
    \frac{1}{2\pi\sqrt{-1}}\int\limits_{\Sp^1}
        \frac{ z^{\ell-1} \, dz}
        {\prod\limits_{d_k\in 2\Z}(1 - t_k) \prod\limits_{(k,i)\in\Lambda}
            (1 - t_k z^{-a_{k,i}})(1 - t_k z^{a_{k,i}})}.
\end{equation}

To compute the Hilbert series, we will evaluate the integrals in Equations~\eqref{eq:GenIntegral}
and \eqref{eq:GenIntegralMultivar} using the methods of \cite[Section~4.6.1 and 4.6.4]{DerskenKemperBook}.
This was accomplished in \cite{CayresPintoHerbigHerdenSeaton} for invariants, i.e. the case $V_L = \C$
is trivial. For arbitrary $V_L$, we define the quantity
\begin{equation}
\label{eq:DefNu}
    \nu_{W,L}    :=  L + 1 - \sum_{(k,i)\in\Lambda} a_{k,i}.
\end{equation}
When $\nu_{W,L} \leq 0$, the computation of $\Hilb_{(\C[W]\otimes V_L)^{\SL_2}}(t)$ follows that
of $\Hilb_{\C[W]^{\SL_2}}(t)$ in \cite{CayresPintoHerbigHerdenSeaton} with little change, while
a new residue appears when $\nu_{W,L} > 0$.

We note that the Hilbert series $\Hilb_{(\C[W]\otimes U)^{\SL_2}}(t_1,\ldots,t_r)$ and
$\Hilb_{(\C[W]\otimes U)^{\SL_2}}(t)$ could also be computed using the Clebsch-Gordan decomposition
of $\C[W]\otimes U$ and the results of \cite{CayresPintoHerbigHerdenSeaton}.
We found the approach outlined and implemented here to be the most direct.


\subsection{The Multivariate Hilbert Series}
\label{subsec:CovarHilbSerMultivar}

Our first goal is the computation of the $\Gamma$-graded Hilbert Series
$\Hilb_{(\C[W]\otimes V_L)^{\SL_2}}(t_1,\ldots,t_r)$
of the covariants $W \to V_L$ by evaluating the integral in Equation~\eqref{eq:GenIntegralMultivar}.
To understand the integrand at $z = 0$, we rewrite it as
\[
    \frac{ z^{\ell-1 + \sum_{(k,i)\in\Lambda} a_{k,i}}}
        {\prod\limits_{d_k\in 2\Z}(1 - t_k) \prod\limits_{(k,i)\in\Lambda}
            (z^{a_{k,i}} - t_k)(1 - t_k z^{a_{k,i}})}.
\]
Recalling that $a_{k,i} > 0$ for each $(k,i)\in\Lambda$, it is clear that if $\ell = L + 2$, then
this expression is holomorphic at $z = 0$. Similarly, if $\ell = -L$, then the numerator is equal to
\[
    z^{-L-1 + \sum_{(k,i)\in\Lambda} a_{k,i}}
    =
    z^{- \nu_{W,L}},
\]
so the integrand is holomorphic at $z = 0$ if and only if $\nu_{W,L} \leq 0$.

Assume each $t_k$ is fixed with $|t_k| < 1$. As each $a_{k,i} > 0$, the poles inside the unit disk
away from $z=0$  occur when the factors $1 - t_k z^{-a_{k,i}}$ vanish. Hence, the poles are at
points of the form $z = \zeta t_k^{1/a_{k,i}}$ where $\zeta$ is an $a_{k,i}$th root of unity
and $t_k^{1/a_{k,i}}$ is defined using a fixed branch of the logarithm whose domain includes
each of the finitely many points $t_k$ to which it is applied.

Fix a $(K,I)\in\Lambda$ and an
$a_{K,I}$th root of unity $\zeta_0$, and then we express
\begin{align*}
    &\frac{ z^{\ell-1} }
    {\prod\limits_{d_k\in 2\Z}(1 - t_k) \prod\limits_{(k,i)\in\Lambda}
        (1 - t_k z^{-a_{k,i}})(1 - t_k z^{a_{k,i}})}
    \\&\quad\quad=
    \frac{ z^{a_{K,I} + \ell - 1} }
    {(z^{a_{K,I}} - t_K) (1 - t_K z^{a_{K,I}}) \prod\limits_{d_k\in 2\Z}(1 - t_k)
        \prod\limits_{\substack{(k,i)\in\Lambda\smallsetminus \\ \{(K,I)\}}}
            (1 - t_k z^{-a_{k,i}})(1 - t_k z^{a_{k,i}}) }
    \\&\quad\quad=
    \frac{ z^{a_{K,I} + \ell - 1}}
    {(1 - t_K z^{a_{K,I}})
        (z - \zeta_0 t_K^{1/a_{K,I}})
        \prod\limits_{\substack{\zeta^{a_{K,I}} = 1\\ \zeta\neq\zeta_0}} (z - \zeta t_K^{1/a_{K,I}})
        \prod\limits_{d_k\in 2\Z}(1 - t_k) \prod\limits_{\substack{(k,i)\in\Lambda\smallsetminus \\ \{(K,I)\}}}
            (1 - t_k z^{-a_{k,i}})(1 - t_k z^{a_{k,i}}) }.
\end{align*}
Hence, we have a simple pole at $z = \tau := \zeta_0 t_K^{1/a_{K,I}}$, and the residue at $z = \tau$
is given by
\begin{align*}
    &\frac{ \tau^{a_{K,I}+\ell - 1}}
    {(1 - t_K \tau^{a_{K,I}})
        \prod\limits_{\substack{\zeta^{a_{K,I}} = 1\\ \zeta\neq\zeta_0}} (\tau - \zeta t_K^{1/a_{K,I}})
        \prod\limits_{d_k\in 2\Z}(1 - t_k) \prod\limits_{\substack{(k,i)\in\Lambda\smallsetminus \\ \{(K,I)\}}}
            (1 - t_k \tau^{-a_{k,i}})(1 - t_k \tau^{a_{k,i}}) }
    \\&=
    \frac{ \tau^{a_{K,I} + \ell - 1} }
    {(1 - t_K^2) \tau^{a_{K,I} - 1}
        \prod\limits_{\substack{\zeta^{a_{K,I}} = 1\\ \zeta\neq 1}} (1 - \zeta)
        \prod\limits_{d_k\in 2\Z}(1 - t_k) \prod\limits_{\substack{(k,i)\in\Lambda\smallsetminus \\ \{(K,I)\}}}
            (1 - t_k \tau^{-a_{k,i}})(1 - t_k \tau^{a_{k,i}}) }
    \\&=
    \frac{ \zeta_0^\ell t_K^{\ell/a_{K,I}} }
    {a_{K,I}(1 - t_K^2) \prod\limits_{d_k\in 2\Z}(1 - t_k)
        \prod\limits_{\substack{(k,i)\in\Lambda\smallsetminus \\ \{(K,I)\}}}
            (1 - \zeta_0^{-a_{k,i}} t_k t_K^{-a_{k,i}/a_{K,I}})
            (1 - \zeta_0^{a_{k,i}} t_k t_K^{a_{k,i}/a_{K,I}})
            }.
\end{align*}
Summing over each choice of $(K,I)$ and $\zeta_0$, we have that when $\nu_{W,L} \leq 0$.,
\[
    \Upsilon_{W,\ell}(t_1,\ldots,t_k)
    =
    \sum\limits_{(K,I)\in\Lambda} \sum\limits_{\zeta^{a_{K,I}} = 1}
    \frac{ \zeta^{\ell} t_K^{\ell/a_{K,I}} }
    {a_{K,I}(1 - t_K^2) \prod\limits_{d_k\in 2\Z}(1 - t_k)
        \prod\limits_{\substack{(k,i)\in\Lambda\smallsetminus \\ \{(K,I)\}}}
            (1 - \zeta^{-a_{k,i}} t_k t_K^{-a_{k,i}/a_{K,I}})
            (1 - \zeta^{a_{k,i}} t_k t_K^{a_{k,i}/a_{K,I}})
            }.
\]

When $\nu_{W,\ell} > 0$, we have in addition a pole at $z = 0$ in the integrand of
$\Upsilon_{W, -L}(t_1,\ldots,t_r)$,
\[
    \frac{1}
        {z^{\nu_{W,L}} \prod\limits_{d_k\in 2\Z}(1 - t_k) \prod\limits_{(k,i)\in\Lambda}
            (z^{a_{k,i}} - t_k)(1 - t_k z^{a_{k,i}})}.
\]
The residue at $z = 0$ is given by
\[
    \frac{1}{(\nu_{W,\ell} - 1)! \prod\limits_{d_k\in 2\Z}(1 - t_k)}
    \frac{ d^{\nu_{W,\ell}-1} }{ dz^{\nu_{W,\ell}-1} }
        \left.\left( \frac{ 1 }
            {\prod\limits_{(k,i)\in\Lambda}
                (z^{a_{k,i}} - t_k)(1 - t_k z^{a_{k,i}})}
        \right)\right|_{z=0} .
\]
Using the Cauchy product formula for the Maclaurin series of
$1/\big(\prod_{(k,i)\in\Lambda} (z^{a_{k,i}} - t_k)(1 - t_k z^{a_{k,i}})\big)$ and the series expansions
\[
    \frac{1}{1 - t_k z^{a_{k,i}}}
    =
    \sum\limits_{j = 0}^\infty t_k^{j} z^{j a_{k,i}}
    \quad\quad\mbox{and}\quad\quad
    \frac{1}{z^{a_{k,i}} - t_k}
    =
    \sum\limits_{j = 0}^\infty - \frac{ z^{j a_{k,i}} }{t_k^{j+1}},
\]
the residue is given by summing products of the form $\prod_{(k,i)\in\Lambda} -t_k^{I_{k,i} }/t^{J_{k,i}+1 }$
where the $I_{k,i} + J_{k,i}$ are coefficients of a linear combination of the $a_{k,i}$ that is equal to
$\nu_{W,\ell} - 1$ and the sum is over all such linear combinations. In other words, we can express this residue as
\begin{align}
\label{eq:ResidueAtZero}
    \nonumber
    \frac{1}{\prod\limits_{d_k\in 2\Z}(1 - t_k)}
    \sum\limits_{S} \prod\limits_{(k,i)\in\Lambda}
        \left( (t_k^{I_{k,i}}) \left(- \frac{1}{t_k^{J_{k,i} + 1}} \right)\right)
    =
    \frac{(-1)^C}{\prod\limits_{d_k\in 2\Z}(1 - t_k)}
    \sum\limits_{S} \prod\limits_{(k,i)\in\Lambda}
        t_k^{I_{k,i} - J_{k,i} - 1}
\end{align}
where $S$ is the set of nonnegative integer solutions $(I_{k,i}, J_{k,i})_{(k,i)\in\Lambda}$
of the equation $\sum_{(k,i)\in\Lambda} (I_{k,i} + J_{k,i}) a_{k,i} = \nu_{W,\ell} - 1$, i.e.
\begin{equation}
\label{eq:DefS}
    S = \left\{ (I_{k,i}, J_{k,i})_{(k,i)\in\Lambda} \in\Z_{\geq 0}^{2C}
        \; : \; \sum\limits_{(k,i)\in\Lambda} (I_{k,i} + J_{k,i}) a_{k,i} = \nu_{W,\ell} - 1
        \right\} .
\end{equation}
Combining the above observations and recalling that the Hilbert series
$\Hilb_{(\C[W]\otimes V_L)^{\SL_2}}(t_1,\ldots,t_r)$ of the module of covariants
is given by $\Upsilon_{W,-L}(t_1,\ldots,t_k) - \Upsilon_{W, L+2}(t_1,\ldots,t_k)$,
we have the following.

\begin{proposition}
\label{prop:CovarMultivarHilbSer}
Let $W = \bigoplus_{k=1}^r V_{d_k}$ be an $\SL_2$-representation with $W^{\SL_2} = \{0\}$
and let $L\in\Z^+$. If $\nu_{W,L} \leq 0$, then the $\Gamma$-graded Hilbert series
$\Hilb_{(\C[W]\otimes V_L)^{\SL_2}}(t_1,\ldots,t_r)$ of the module of covariants $W \to V_L$ is given by
\begin{align}
\label{eq:CovarMultivarHilbSerNoZeroRes}
    &\Upsilon_{W,-L}(t) - \Upsilon_{W, L+2}(t)
    \\ \nonumber &=
    \sum\limits_{(K,I)\in\Lambda} \sum\limits_{\zeta^{a_{K,I}} = 1}
    \frac{ \zeta^{-L} t_K^{-L/a_{K,I}} - \zeta^{L+2} t_K^{(L+2)/a_{K,I}}}
    {a_{K,I}(1 - t_K^2) \prod\limits_{d_k\in 2\Z}(1 - t_k)
        \prod\limits_{\substack{(k,i)\in\Lambda\smallsetminus \\ \{(K,I)\}}}
            (1 - \zeta^{-a_{k,i}} t_k t_K^{-a_{k,i}/a_{K,I}})
            (1 - \zeta^{a_{k,i}} t_k t_K^{a_{k,i}/a_{K,I}})
            }.
\end{align}
If $\nu_{W,L} > 0$, the Hilbert series is given by
\begin{align}
\label{eq:CovarMultivarHilbSerZeroRes}
    &\Upsilon_{W,-L}(t) - \Upsilon_{W, L+2}(t)
    \\ \nonumber &=
    \sum\limits_{(K,I)\in\Lambda} \sum\limits_{\zeta^{a_{K,I}} = 1}
    \frac{ \zeta^{-L} t_K^{-L/a_{K,I}} - \zeta^{L+2} t_K^{(L+2)/a_{K,I}}}
    {a_{K,I}(1 - t_K^2) \prod\limits_{d_k\in 2\Z}(1 - t_k)
        \prod\limits_{\substack{(k,i)\in\Lambda\smallsetminus \\ \{(K,I)\}}}
            (1 - \zeta^{-a_{k,i}} t_k t_K^{-a_{k,i}/a_{K,I}})
            (1 - \zeta^{a_{k,i}} t_k t_K^{a_{k,i}/a_{K,I}})
            }
    \\ \nonumber &
    \frac{(-1)^C}{\prod\limits_{d_k\in 2\Z}(1 - t_k)}
    \sum\limits_{S} \prod\limits_{(k,i)\in\Lambda}
        t_k^{I_{k,i} - J_{k,i} - 1},
\end{align}
where $S$ is defined in Equation~\eqref{eq:DefS}.
\end{proposition}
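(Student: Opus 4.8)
The plan is to evaluate the two contour integrals $\Upsilon_{W,-L}(t_1,\ldots,t_r)$ and $\Upsilon_{W,L+2}(t_1,\ldots,t_r)$ of Equation~\eqref{eq:GenIntegralMultivar} by the residue theorem and then take their difference. Fixing each $t_k$ with $|t_k| < 1$, the integrand is a rational function of $z$ whose only poles inside the open unit disk are those identified above: the poles at $z = \zeta t_K^{1/a_{K,I}}$ for $(K,I)\in\Lambda$ and $a_{K,I}$th roots of unity $\zeta$, arising from the factors $1 - t_k z^{-a_{k,i}}$, together with a possible pole at $z = 0$. The remaining factors $1 - t_k z^{a_{k,i}}$ vanish only at $|z| = |t_k|^{-1/a_{k,i}} > 1$ and hence contribute nothing. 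Thus $\Upsilon_{W,\ell}$ equals the sum of the residues at the interior poles, and it remains to record each contribution.

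First I would assemble the nonzero-pole residues. The residue at $z = \zeta_0 t_K^{1/a_{K,I}}$ has already been computed above and equals the displayed expression with numerator $\zeta_0^{\ell} t_K^{\ell/a_{K,I}}$; summing over $(K,I)\in\Lambda$ and over the $a_{K,I}$th roots of unity $\zeta_0$ gives the first double sum. Specializing to $\ell = -L$ and $\ell = L+2$ and subtracting produces exactly the numerator $\zeta^{-L} t_K^{-L/a_{K,I}} - \zeta^{L+2} t_K^{(L+2)/a_{K,I}}$ appearing in both \eqref{eq:CovarMultivarHilbSerNoZeroRes} and \eqref{eq:CovarMultivarHilbSerZeroRes}.

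Next I would treat the pole at $z=0$. As shown above, after clearing the negative powers of $z$ the numerator of the integrand of $\Upsilon_{W,L+2}$ carries the exponent $L+1+\sum_{(k,i)\in\Lambda} a_{k,i} > 0$, so that integrand is holomorphic at $z=0$ and contributes no residue there; the same holds for $\Upsilon_{W,-L}$ precisely when the exponent $-\nu_{W,L}$ is nonnegative, i.e. when $\nu_{W,L}\le 0$, which yields \eqref{eq:CovarMultivarHilbSerNoZeroRes}. When $\nu_{W,L} > 0$ there is instead a pole of order $\nu_{W,L}$ at $z=0$ in the integrand of $\Upsilon_{W,-L}$, and its residue is the coefficient of $z^{\nu_{W,L}-1}$ in the Maclaurin expansion computed above; expanding each factor $1/(1 - t_k z^{a_{k,i}})$ and $1/(z^{a_{k,i}} - t_k)$ in its Maclaurin series and collecting terms by the Cauchy product identifies this coefficient with the sum over the solution set $S$ of \eqref{eq:DefS}, producing the extra term in \eqref{eq:CovarMultivarHilbSerZeroRes}.

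The one point requiring care is that the simple-pole residue formula at $z = \zeta_0 t_K^{1/a_{K,I}}$ presupposes that these poles are simple and mutually distinct, which can fail on the locus where two of the quantities $\zeta t_k^{1/a_{k,i}}$ coincide (for instance when several $a_{k,i}$ agree and the corresponding $t_k$ are commensurate). I would resolve this by genericity and analytic continuation: for $t$ in a dense open set the interior poles are distinct and simple, so the residue computation is valid there, and since both sides of the asserted identities are rational functions of $t_1,\ldots,t_r$ agreeing on this dense set, they agree identically. I expect this analytic-continuation step, rather than any individual residue calculation, to be the main obstacle to a fully rigorous argument.
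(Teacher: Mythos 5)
Your proposal is correct and follows essentially the same route as the paper: the paper's proof is precisely the residue-theorem computation you describe, with the simple poles at $z = \zeta t_K^{1/a_{K,I}}$ producing the double sum, and the pole at $z=0$ (present in $\Upsilon_{W,-L}$ exactly when $\nu_{W,L}>0$) evaluated via the Maclaurin/Cauchy-product expansion over the solution set $S$. The genericity issue you flag is real but mild --- within a single irreducible $V_{d_k}$ the positive weights are distinct, so coincident poles occur only on a proper subvariety of the multivariate parameters, matching your analytic-continuation step --- and the genuinely degenerate coincidences that arise under the univariate specialization $t_k = t$ are treated by the paper separately in Subsection~\ref{subsec:CovarHilbSerUnivar} via a removable-singularity (Vandermonde) argument.
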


\begin{remark}
\label{rem:CovarMiltivarHilbSerSimplified}
Using the complete list $\Theta$ of weights and approximating $\bs{a}_{\Theta}$ with real parameters
$\bs{b}_{\Theta} := \big(b_{k,i} : (k,i)\in\Theta\big)$, one can express
Equation~\eqref{eq:CovarMultivarHilbSerNoZeroRes} (and hence the first line of
Equation~\eqref{eq:CovarMultivarHilbSerZeroRes}) more succinctly as
\[
    \sum\limits_{(K,I)\in\Lambda} \sum\limits_{\zeta^{a_{K,I}} = 1}
    \frac{ \zeta^{-L} t_K^{-L/a_{K,I}} - \zeta^{L+2} t_K^{(L+2)/a_{K,I}}}
    {a_{K,I} \prod\limits_{\substack{(k,i)\in\Theta\smallsetminus \\ \{(K,I)\}}}
            (1 - \zeta^{-a_{k,i}} t_k t_K^{-a_{k,i}/a_{K,I}})
            }.
\]
\end{remark}

\begin{remark}
\label{cor:ParityVanishing}
Let $W = \bigoplus_{k=1}^r V_{d_k}$ be a unitary $\SL_2$-representation with
$W^{\SL_2} = \{0\}$ and let $L\in\Z^+$. If each $d_k$ is even and $L$ is odd, then
by the Clebsch-Gordan decomposition, the $\SL_2$-representation $\C[W]\otimes V_L$ has no invariants, and hence
$\Hilb_{(\C[W]\otimes V_L)^{\SL_2}}(t_1,\ldots,t_r) = 0$.
This can also easily be concluded from Equations~\eqref{eq:CovarMultivarHilbSerNoZeroRes}
and \eqref{eq:CovarMultivarHilbSerZeroRes} by noting that in this case, the terms in the sum over
$\zeta$ in Equation~\eqref{eq:CovarMultivarHilbSerNoZeroRes} occur in positive and negative pairs,
and by a parity argument, the set $S$ in Equation~\eqref{eq:CovarMultivarHilbSerZeroRes} is empty.
\end{remark}


\subsection{The Univariate Hilbert Series}
\label{subsec:CovarHilbSerUnivar}

The univariate Hilbert series $\Hilb_{(\C[W]\otimes V_L)^{\SL_2}}(t)$ is evidently given by the
substitution $\Hilb_{(\C[W]\otimes V_L)^{\SL_2}}(t) = \Hilb_{(\C[W]\otimes V_L)^{\SL_2}}(t,\ldots,t)$
in Equations~\eqref{eq:CovarMultivarHilbSerNoZeroRes} and \eqref{eq:CovarMultivarHilbSerZeroRes}. However,
in Equations~\eqref{eq:CovarMultivarHilbSerNoZeroRes}, this yields
\[
    \sum\limits_{(K,I)\in\Lambda} \sum\limits_{\zeta^{a_{K,I}} = 1}
    \frac{ \zeta^{-L} t^{-L/a_{K,I}} - \zeta^{L+2} t^{(L+2)/a_{K,I}}}
    {a_{K,I}(1 - t^2) (1 - t)^e
        \prod\limits_{\substack{(k,i)\in\Lambda\smallsetminus \\ \{(K,I)\}}}
            (1 - \zeta^{-a_{k,i}} t^{(a_{K,I} - a_{k,i})/a_{K,I}})
            (1 - \zeta^{a_{k,i}} t^{(a_{K,I} + a_{k,i})/a_{K,I}})
            },
\]
where $e$ denotes the number of even $d_k$. When two of the $d_k$ have the same parity, this
expression fails to be defined; for example, for terms where $\zeta = 1$ and
$a_{K,I} = a_{k,i}$, we have $1 - \zeta^{-a_{k,i}} t^{(a_{K,I} - a_{k,i})/a_{K,I}} = 0$.
However, it was explained in \cite[Section 3.2]{CayresPintoHerbigHerdenSeaton} for the case
$L = 0$ that the singularities that occur in these degenerate cases are removable; an identical
procedure applies in this case and was also applied in \cite[Section 3.2]{HerbigSeaton} and
\cite[Theorem 3.3]{CowieHerbigSeatonHerden}. Note that no such singularities arise in the
additional terms in Equation~\eqref{eq:CovarMultivarHilbSerZeroRes}.

Specifically, suppose $a$ is a positive value of
$a_{i,j}$ that occurs $N$ times, and let $\Lambda^a := \{(k,i)\in\Lambda \mid a_{k,i}\neq a \}$.
We choose $x_j$ for $j = 1,\ldots,N$ and $x_{k,i}$ for $(k,i)\in\Lambda^a$ to be distinct elements
of the interior of the unit disk. One then considers the integral
\[
    \frac{1}{2\pi\sqrt{-1}}\int\limits_{\Sp^1}
        \frac{ z^{\ell-1} \, dz}
        {(1 - t)^e \prod\limits_{j=1}^N (1 - x_j z^{-a})(1 - x_j z^{a})
            \prod\limits_{(k,i)\in\Lambda^a}
            (1 - t x_{k,i}^{-a_{k,i}})(1 - x_{k,i} z^{a_{k,i}})},
\]
which can be computed as in Subsection~\ref{subsec:CovarHilbSerMultivar}. The singularities
appear in the form of factors $x_p - x_q$ in the denominator, and the sum of residues for
a fixed $\zeta$ is divisible by the full Vandermonde determinant
$\prod_{1\leq p < q \leq N} x_p - x_q$. It is then easy to verify that the numerator is an
alternating polynomial in the $x_j$, implying that these singularities are removable.
Therefore, approximating the $a_{k,i}$ with distinct real parameters $b_{k,i}$ and letting
$\bs{b} := \big(b_{k,i} : (k,i)\in\Lambda\big)$, we can express $\Hilb_{(\C[W]\otimes V_L)^{\SL_2}}(t)$
as the limit $\bs{b}\to\bs{a}$ as follows.

\begin{theorem}
\label{thrm:CovarUnivarHilbSer}
Let $W = \bigoplus_{k=1}^r V_{d_k}$ be a unitary $\SL_2$-representation with $W^{\SL_2} = \{0\}$
and let $L\in\Z^+$. If $\nu_{W,L} \leq 0$, then the Hilbert series $\Hilb_{(\C[W]\otimes V_L)^{\SL_2}}(t)$
of the module of covariants $W \to V_L$ is given by
\begin{equation}
\label{eq:CovarUnivarHilbSerNoZeroRes}
    \lim\limits_{\bs{b}\to\bs{a}}
    \sum\limits_{(K,I)\in\Lambda} \sum\limits_{\zeta^{a_{K,I}} = 1}
    \frac{ \zeta^{-L} t^{-L/b_{K,I}} - \zeta^{L+2} t^{(L+2)/b_{K,I}}}
    {b_{K,I}(1 - t^2) (1 - t)^e
        \prod\limits_{\substack{(k,i)\in\Lambda\smallsetminus \\ \{(K,I)\}}}
            (1 - \zeta^{-a_{k,i}} t^{(b_{K,I} - b_{k,i})/b_{K,I}})
            (1 - \zeta^{a_{k,i}} t^{(b_{K,I} + b_{k,i})/b_{K,I}})
            }.
\end{equation}
If $\nu_{W,L} > 0$, the Hilbert series is given by
\begin{align}
\label{eq:CovarUnivarHilbSerZeroRes}
    &\lim\limits_{\bs{b}\to\bs{a}}
    \sum\limits_{(K,I)\in\Lambda} \sum\limits_{\zeta^{a_{K,I}} = 1}
    \frac{ \zeta^{-L} t^{-L/b_{K,I}} - \zeta^{L+2} t^{(L+2)/b_{K,I}}}
    {b_{K,I}(1 - t^2) (1 - t)^e
        \prod\limits_{\substack{(k,i)\in\Lambda\smallsetminus \\ \{(K,I)\}}}
            (1 - \zeta^{-a_{k,i}} t^{(b_{K,I} - b_{k,i})/b_{K,I}})
            (1 - \zeta^{a_{k,i}} t^{(b_{K,I} + b_{k,i})/b_{K,I}})
            }
    \\ \nonumber & + \frac{(-1)^C}{(1 - t)^e}
    \sum\limits_{S} \prod\limits_{(k,i)\in\Lambda}
        t^{I_{k,i} - J_{k,i} - 1},
\end{align}
where $S$ is defined in Equation~\eqref{eq:DefS}.
\end{theorem}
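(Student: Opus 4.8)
The result is essentially a univariate specialization of Proposition~\ref{prop:CovarMultivarHilbSer}, so my strategy is to obtain it directly from the multivariate formulas by setting $t_1 = \cdots = t_r = t$, handling the removable singularities that this substitution creates. I would proceed in three stages: first perform the naive substitution $t_k \mapsto t$ in Equations~\eqref{eq:CovarMultivarHilbSerNoZeroRes} and \eqref{eq:CovarMultivarHilbSerZeroRes}; second, identify exactly where this substitution produces a genuine singularity (rather than merely a $0/0$ that cancels termwise); and third, justify that these singularities are removable, so that the expression is legitimately recovered as a limit $\bs{b}\to\bs{a}$ over distinct real parameters $b_{k,i}$ approximating the integer weights $a_{k,i}$.

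For the second-line term of Equation~\eqref{eq:CovarUnivarHilbSerZeroRes}, there is nothing to do: the residue at $z = 0$ described in Equation~\eqref{eq:ResidueAtZero} is a finite sum indexed by the set $S$, and under $t_k \mapsto t$ each monomial $\prod_{(k,i)} t_k^{I_{k,i} - J_{k,i} - 1}$ simply becomes $\prod_{(k,i)} t^{I_{k,i} - J_{k,i} - 1}$ with the prefactor $1/\prod_{d_k \in 2\Z}(1-t_k)$ collapsing to $1/(1-t)^e$, where $e$ counts the even $d_k$. I would remark explicitly, as the text already notes, that no singularities arise here. The substantive content is therefore confined to the first line, which is the sum of residues at the nonzero poles $z = \zeta_0 t_K^{1/a_{K,I}}$.

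The heart of the argument is the removability claim for that first line. After substituting $t_k \mapsto t$, a factor $(1 - \zeta^{-a_{k,i}} t^{(a_{K,I} - a_{k,i})/a_{K,I}})$ in the denominator vanishes precisely when $a_{k,i} = a_{K,I}$ and $\zeta^{a_{k,i}} = 1$ (for the ``$-$'' factor with $\zeta=1$), i.e.\ whenever two distinct summands share the same weight value. The technique for resolving this is the one already invoked for the invariant case $L=0$ in \cite[Section 3.2]{CayresPintoHerbigHerdenSeaton}: rather than specializing all $t_k$ at once, one groups the summands by a common positive weight value $a$ occurring with multiplicity $N$, replaces the corresponding variables by distinct auxiliary parameters $x_1,\ldots,x_N$ (and $x_{k,i}$ off the group), and recomputes the sum of residues for each fixed $\zeta$ as a genuine contour integral as in Subsection~\ref{subsec:CovarHilbSerMultivar}. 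The singular factors then appear as differences $x_p - x_q$ in the denominators, so the pooled sum of residues is a rational function whose denominator contains the Vandermonde $\prod_{1 \le p < q \le N}(x_p - x_q)$. The key step is to check that the numerator of this pooled sum is an \emph{alternating} polynomial in $x_1,\ldots,x_N$: under transposition $x_p \leftrightarrow x_q$ the residue attached to $x_p$ is carried to minus the residue attached to $x_q$ (the sign coming from the exchange of which factor supplies the simple pole), so the full numerator changes sign. An alternating polynomial is divisible by the Vandermonde, which cancels the singular denominator, leaving a polynomial; the common limit $\bs{b} \to \bs{a}$ through distinct real values then exists and is independent of the approach.

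\medskip

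\textbf{The main obstacle.} The delicate point is verifying that the introduction of the extra numerator factor $\zeta^{-L} t^{-L/b_{K,I}} - \zeta^{L+2} t^{(L+2)/b_{K,I}}$ (absent in the $L=0$ case) does not spoil the alternating-polynomial property and hence the divisibility by the Vandermonde. Concretely, when two summands indexed by $(K,I)$ and $(k,i)$ in the same weight-group $a$ are compared, one must confirm that the $L$-dependent numerators transform compatibly under the $x_p \leftrightarrow x_q$ symmetry so that the pooled numerator remains alternating after including these terms. I expect this to reduce to the observation that at coincident weights $a_{K,I} = a_{k,i} = a$ the two numerators agree (both $\zeta$ and the exponents $-L/a$, $(L+2)/a$ depend only on the shared weight $a$, not on the index within the group), so the $L$-dependence factors out uniformly across the group and the alternating structure is inherited verbatim from the $L=0$ analysis of \cite{CayresPintoHerbigHerdenSeaton}. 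Once this compatibility is in hand, the removability, and therefore the validity of the limit expressions in Equations~\eqref{eq:CovarUnivarHilbSerNoZeroRes} and \eqref{eq:CovarUnivarHilbSerZeroRes}, follows exactly as before.
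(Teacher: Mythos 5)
Your proposal takes essentially the same route as the paper: specialize the multivariate formulas of Proposition~\ref{prop:CovarMultivarHilbSer} via $t_k \mapsto t$, observe that the residue-at-zero term produces no singularities, and remove the degenerate-case singularities by introducing auxiliary variables within each weight group so that the pooled sum of residues has a Vandermonde in its denominator and an alternating numerator, exactly the argument the paper imports from the $L=0$ case. One minor caveat on your ``main obstacle'' paragraph: the alternation is most cleanly seen from the symmetry of the auxiliary integral in the $x_j$ (the $L$-dependent numerator factors $\zeta^{\ell}x_p^{\ell/a}$ genuinely vary with the index $p$ inside a group, just as the factors $1-\zeta^{2}x_p^{2/a}$ already did when $L=0$), rather than from the numerators ``agreeing'' at coincident weights, but this does not affect the validity of the argument.
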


\begin{remark}
\label{rem:CovarUnivarHilbSerSimplified}
As in the case of Remark~\ref{rem:CovarMiltivarHilbSerSimplified},
we can express Equation~\eqref{eq:CovarUnivarHilbSerNoZeroRes} and the first line of
Equation~\eqref{eq:CovarUnivarHilbSerZeroRes} more succinctly as
\[
    \lim\limits_{\bs{b}\to\bs{a}}
    \sum\limits_{(K,I)\in\Lambda} \sum\limits_{\zeta^{a_{K,I}} = 1}
    \frac{ \zeta^{-L} t^{-L/b_{K,I}} - \zeta^{L+2} t^{(L+2)/b_{K,I}}}
    {b_{K,I} \prod\limits_{\substack{(k,i)\in\Theta\smallsetminus \\ \{(K,I)\}}}
            (1 - \zeta^{-a_{k,i}} t^{(b_{K,I} - b_{k,i})/b_{K,I}})
            }.
\]
\end{remark}

As we will see below, expressing the Hilbert series $\Hilb_{(\C[W]\otimes V_L)^{\SL_2}}(t)$ as a limit as
in Theorem~\ref{thrm:CovarUnivarHilbSer} will allow us to compute the first few Laurent coefficients in general.
Moreover, the expressions in Theorem~\ref{thrm:CovarUnivarHilbSer} can be used to determine an algorithm to compute
the Hilbert series. In the \emph{generic} case, i.e. when the weights $a_{k,i}$ are
distinct (which implies that $r \leq 2$ and, when $r = 2$, the $d_i$ have opposite parities),
the limit is unnecessary. The algorithm is accomplished by
using the operator $U_a$ on formal power series that assigns to $F(t) = \sum_{i=0}^\infty F_i t^i$ the series
\[
    (U_a F)(t) :=   \sum\limits_{i=0}^\infty F_{ia} t^i.
\]
When $F(t)$ is the power series of a rational function, it is easy to see that
\[
    (U_a F)(t)  =   \frac{1}{a} \sum\limits_{\zeta^a = 1} F(\zeta \sqrt[a](t)),
\]
so that $U_a$ can be used to compute the sums over roots of unity in
Equations~\eqref{eq:CovarUnivarHilbSerNoZeroRes} and \eqref{eq:CovarUnivarHilbSerZeroRes}.
In the \emph{degenerate} case, where
the $a_{k,i}$ are not distinct and so that the limit is required, a partial fraction decomposition can be
used to remove the singularities before applying a similar procedure.

This algorithm was described in detail for the case $L = 0$ in \cite[Section 6]{CayresPintoHerbigHerdenSeaton}.
The case of Equation~\eqref{eq:CovarUnivarHilbSerNoZeroRes} has only slight modifications, so we refer the reader
to that reference for more details. For the case of Equation~\eqref{eq:CovarUnivarHilbSerZeroRes}, one need only
compute the second line, the residue at $z=0$, which is easy to implement directly. We have implemented this
algorithm on \emph{Mathematica} \cite{Mathematica}, and it is available from the authors upon request.


\subsection{The Laurent Coefficients of the Hilbert series}
\label{subsec:CovarHilbSerLaurent}

In this section, we compute the first three Laurent coefficients of the Hilbert series
$\Hilb_{(\C[W]\otimes V_L)^{\SL_2}}(t)$ of covariants. First, let us establish some notation.
Recall that $D$ denotes the dimension of $W$, and $\C[W]^{\SL_2}$ has Krull dimension $3 - D$
unless $(W,\SL_2)$ fails to be $1$-large, i.e. unless $W$ is isomorphic to $V_1$, $2V_1$,
or $V_2$, see \cite[Remark 9.2(3)]{GWSlifting} and \cite[Theorem 3.4]{HerbigSchwarz}.
We use $\gamma_{m,L}(W)$ for the Laurent coefficients, so that
\[
    \Hilb_{(\C[W]\otimes V_L)^{\SL_2}}(t)
    =   \sum\limits_{m=0}^\infty \gamma_{m, L}(W) (1 - t)^{m - \dim \C[W]^{\SL_2}}.
\]
We will often abbreviate $\gamma_{m,L}(W)$ as $\gamma_{m,L}$ when $W$ is clear from the context.
Following \cite{CayresPintoHerbigHerdenSeaton}, we let $\gamma_m(W) := \gamma_{m,0}(W)$ denote the Laurent
coefficients of the invariants. Note in particular that we index the $\gamma_{m,L}(W)$ to match the degrees
of the $\gamma_m(W)$, i.e. $\gamma_{m,L}(W)$ will denote the degree $3-D+m$ coefficient even if this implies
$\gamma_{0,L}(W) = 0$. We will see below that the pole order of $\Hilb_{(\C[W]\otimes V_L)^{\SL_2}}(t)$
is bounded by that of $\C[W]^{\SL_2}$, which was also observed in \cite{BroerCovariants}.

In order to take advantage of the computations of the $\gamma_m$ in \cite{CayresPintoHerbigHerdenSeaton},
we use a slightly different formulation of the Hilbert series than that given in Theorem~\ref{thrm:CovarUnivarHilbSer}.
Define the function
\[
    H_{W,K,I,\zeta}^{\ell}(\bs{b}_{\Theta},t)
    =
    \frac{ t^{\ell/b_{K,I}} \zeta^{\ell} \big(1 - \zeta^2 t^{2/b_{K,I}} \big)}
    {b_{K,I} \prod\limits_{\substack{(k,i)\in\Theta \smallsetminus \\ \{(K,I)\}}}
            (1 - \zeta^{-a_{k,i}} t^{(b_{K,I}-b_{k,i})/b_{K,I}}) }.
\]
By telescoping the numerator in the sum over $\ell$, it is easy to see that
when $\nu_{W,L} \leq 0$, the Hilbert series is given by
\[
    \Hilb_{(\C[W]\otimes V_L)^{\SL_2}}(t)
    =
    \lim\limits_{\bs{b}\to\bs{a}}
    \sum\limits_{\ell=0}^L
    \sum\limits_{(K,I)\in\Lambda} \sum\limits_{\zeta^{a_{K,I}} = 1}
    H_{W,K,I,\zeta}^{2\ell-L}(\bs{b}_{\Theta},t).
\]
Note that $H_{W,K,I,\zeta}^0(\bs{b}_{\Theta},t)$ was denoted $H_{W,K,I,\zeta}(\bs{b}_{\Theta},t)$ in
\cite{CayresPintoHerbigHerdenSeaton}. Using this notation,
\[
    H_{W,K,I,\zeta}^{\ell}(\bs{b}_{\Theta},t)
    =
    t^{\ell/b_{K,I}} \zeta^{\ell} H_{W,K,I,\zeta}(\bs{b}_{\Theta},t),
\]
so that we can express the Hilbert series as
\begin{equation}
\label{eq:HilbCovarFullSum}
    \Hilb_{(\C[W]\otimes V_L)^{\SL_2}}(t)
    =
    \lim\limits_{\bs{b}\to\bs{a}}
    \sum\limits_{\ell=0}^L  \sum\limits_{(K,I)\in\Lambda} t^{(2\ell-L)/b_{K,I}}
        \sum\limits_{\zeta^{a_{K,I}} = 1}
            \zeta^{2\ell-L} H_{W,K,I,\zeta}(\bs{b}_{\Theta},t).
\end{equation}
In particular, note that as $t^{(2\ell-L)/b_{K,I}}$ is holomorphic at $t = 1$, the pole orders
of the covariants are bounded by the pole orders of the invariants as noted above.
Let $\gamma_m(W,K,I,\zeta)$ denote the contribution to $\gamma_m$ of the term
$H_{W,K,I,\zeta}(\bs{b}_{\Theta},t)$, i.e. the coefficient of degree $3-D+m$.

\begin{remark}
\label{rem:CovarExceptions}
The computation of the $\gamma_{m,L}$ for $m \leq 2$ can be treated uniformly except for a handful
of low-dimensional $W$, which can easily be computed individually using the algorithm described in
Section~\ref{subsec:CovarHilbSerUnivar}. For the representations $V_1$ and $V_2$, the pole order at $t = 1$
is not equal to $D - 3$; see \cite[Table 1]{CayresPintoHerbigHerdenSeaton}.
Other exceptions arise because a term $H_{W,K,I,\zeta}^{\ell}(\bs{b}_{\Theta},t)$ with $\zeta \neq \pm 1$
contributes to $\gamma_{m,L}$ for $m \leq 2$, which only occurs in small dimensions, or because the terms
$H_{W,K,I,\zeta}^{\ell}(\bs{b}_{\Theta},t)$ fail to have enough factors in the denominator for the general
arguments to apply. We refer the reader to \cite[Section 4.1]{CayresPintoHerbigHerdenSeaton} for a
careful discussion of the reason for each exception; as we use many of the computations from that
reference, the reasoning remains the same. Here, we only summarize that
the exceptions for the computation of $\gamma_{0,L}$ are $V_d$ for $d = 1,2,3,4$ and $2V_1$;
the exceptions for the computation of $\gamma_{1,L}$ are $V_d$ for $d = 1,2,3,4$, $2V_1$, $V_1 \oplus  V_2$,
and $2V_2$ (see below); and
the exceptions for the computation of $\gamma_{2,L}$ are $V_d$ for $d = 1,2,3,4,5,6,8$, $2V_1$, $V_1 \oplus  V_2$,
$V_1 \oplus  V_3$, $V_1 \oplus  V_4$, $2V_2$, $V_2 \oplus  V_3$, $V_2 \oplus  V_4$, $2V_3$, and $2V_4$.
For our primary interest in computing the first Laurent coefficient of $\Hilb_V^{\on}(t)$, only
five of these exceptions are relevant and given in Table~\ref{tab:HilbM0LowDim}; see Section~\ref{sec:SympQuots}.

We may also ignore the additional sum that appears in Equation~\eqref{eq:CovarUnivarHilbSerZeroRes}.
Note that this sum has a pole at $t = 1$ of order of $e$,
and hence contributes to $\gamma_{m,L}$ if and only if $D \leq e + m + 3$.
For $m\leq 2$, one easily checks that this sum contributes to $\gamma_{m,L}$
only in cases on the list of exceptions for $\gamma_{m,L}$ above; this is the reason we included
$2V_2$ on the exception list for $\gamma_{1,L}$, which was not an exception for $\gamma_1$ in
\cite[Theorem 1.1]{CayresPintoHerbigHerdenSeaton}.
\end{remark}

Using the series expansion
\begin{align*}
    t^{(2\ell-L)/b_{K,I}}
    &=  1 + \frac{L - 2\ell}{b_{K,I}}(1 - t) + \frac{(L - 2\ell)(b_{K,I} + L - 2\ell)}{2b_{K,I}^2}(1 - t)^2
    \\&\quad\quad
        + \frac{(L - 2\ell)(b_{K,I} + L - 2\ell)(2b_{K,I} + L - 2\ell)}
            {6 b_{K,I}^3}(1 - t)^3
    \\&\quad\quad
        + \frac{(L - 2\ell)(b_{K,I} + L - 2\ell)(2b_{K,I} + L - 2\ell)(3 b_{K,I} + L - 2\ell)}
            {24 b_{K,I}^4}(1 - t)^4
    \\&\quad\quad
        + \frac{(L - 2\ell)(b_{K,I} + L - 2\ell)(2b_{K,I} + L - 2\ell)(3 b_{K,I} + L - 2\ell)(4 b_{K,I} + L - 2\ell)}
            {120 b_{K,I}^5}(1 - t)^5
        + \cdots
\end{align*}
and Equation~\eqref{eq:HilbCovarFullSum}, we can express $\Hilb_{(\C[W]\otimes V_L)^{\SL_2}}(t)$ as the product
\begin{align}
\label{eq:HilbCovarFullSumExpanded}
    &\Hilb_{(\C[W]\otimes V_L)^{\SL_2}}(t)
    =
    \lim\limits_{\bs{b}\to\bs{a}}
    \sum\limits_{\ell=0}^L  \sum\limits_{(K,I)\in\Lambda}
    \Big( 1 + \frac{L - 2\ell}{b_{K,I}}(1 - t) + \frac{(L - 2\ell)(b_{K,I} + L - 2\ell)}{2b_{K,I}^2}(1 - t)^2
        + \cdots \Big)\cdot
    \\ \nonumber
    &\sum\limits_{\zeta^{a_{K,I}} = 1}
            \zeta^{2\ell-L}
            \Big( \gamma_0(W,K,I,\zeta)(1 - t)^{3 - D} + \gamma_1(W,K,I,\zeta)(1 - t)^{4 - D}
                + \gamma_2(W,K,I,\zeta)(1 - t)^{5 - D} + \cdots \Big).
\end{align}
We will use this expression to compute the Laurent coefficients of
$\Hilb_{(\C[W]\otimes V_L)^{\SL_2}}(t)$ and express them in terms of Schur polynomials.

For an integer partition
$\rho = (\rho_1,\ldots,\rho_n) \in \Z^n$ with $\rho_1\geq\rho_2\geq\cdots\rho_n\geq 0$, let
$s_\rho(\bs{x})$ denote the corresponding Schur polynomial in the variables $\bs{x}=(x_1,\ldots,x_n)$, i.e.
\[
    s_\rho(\bs{x})  =   \frac{\det\left( x_i^{ \rho_j + n - i} \right)}
                            {\det\left( x_i^{ n - i} \right)}.
\]
See \cite[I.3]{MacdonaldSymFuncs} or \cite[Section 4.6]{SaganSymGrp} for more details.
Note that we will sometimes for convenience
refer to $s_\rho(\bs{x})$ where the condition $\rho_1\geq\rho_2\geq\cdots\rho_n\geq 0$ does not hold; these
are defined in the same way but may yield Laurent-Schur polynomials or vanish.
We will often use the shorthand $s_m(\bs{x})$ to denote the Schur polynomial $s_{m,n-2,n-3,\ldots,1,0}(\bs{x})$.


\begin{theorem}
\label{thrm:CovarGamma0}
Let $W = \bigoplus_{k=1}^r V_{d_k}$ be a unitary $\SL_2$-representation with $W^{\SL_2} = \{0\}$
and let $L\in\Z^+$, and assume that $W$ is not isomorphic to $V_d$ for $d \leq 4$ nor $2V_1$.
If at least one $d_k$ is odd, the degree $3-D$ coefficient $\gamma_{0,L}$ of the Laurent series
of $\Hilb_{(\C[W]\otimes V_L)^{\SL_2}}(t)$ is given by
\[
    \gamma_{0,L}    =   (L + 1)\gamma_0
                    =   \frac{(L + 1) s_\rho(\bs{a}) }{s_\delta(\bs{a}) },
\]
where $\rho = (C-3, C-3, C-3, C-4, \ldots, 1,0)$ and $\delta = (C-1, C-2, \ldots, 1, 0)$.
If all $d_k$ are even, then
\[
    \gamma_{0,L}    =   \frac{\big(1 + (-1)^L\big)(L + 1)\gamma_0 }{2}
                    =   \frac{\big(1 + (-1)^L\big)(L + 1) s_\rho(\bs{a}) }{s_\delta(\bs{a}) }.
\]
\end{theorem}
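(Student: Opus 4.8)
The plan is to read off the leading Laurent coefficient directly from the product expansion in Equation~\eqref{eq:HilbCovarFullSumExpanded}, reducing everything to the invariant coefficient $\gamma_0 = \gamma_{0,0}$ already computed in \cite{CayresPintoHerbigHerdenSeaton}. Since $t^{(2\ell-L)/b_{K,I}}$ is holomorphic and nonzero at $t=1$, the coefficient of $(1-t)^{3-D}$ receives contributions only from two sources: the constant term $1$ of the expansion of $t^{(2\ell-L)/b_{K,I}}$ times the coefficient $\gamma_0(W,K,I,\zeta)$ of $(1-t)^{3-D}$ in $H_{W,K,I,\zeta}(\bs{b}_\Theta,t)$, and the linear term $\frac{L-2\ell}{b_{K,I}}(1-t)$ times the subleading coefficient $\gamma_{-1}(W,K,I,\zeta)$ of $(1-t)^{2-D}$. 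Thus I would write
\[
    \gamma_{0,L} = \lim_{\bs{b}\to\bs{a}} \sum_{\ell=0}^L \sum_{(K,I)\in\Lambda} \sum_{\zeta^{a_{K,I}}=1} \zeta^{2\ell-L} \left( \gamma_0(W,K,I,\zeta) + \frac{L-2\ell}{b_{K,I}}\, \gamma_{-1}(W,K,I,\zeta) \right).
\]

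Next I would argue that, for $W$ outside the exceptional list of Remark~\ref{rem:CovarExceptions} for $\gamma_{0,L}$ (exactly the cases excluded in the statement), only $\zeta = \pm 1$ produce terms of pole order $\geq D-3$, so every other $\zeta$ may be discarded, as for the invariants in \cite{CayresPintoHerbigHerdenSeaton}. For $\zeta = \pm 1$ the weight $\zeta^{2\ell-L}$ is independent of $\ell$, equal to $1$ when $\zeta=1$ and to $(-1)^L$ when $\zeta=-1$. Hence the correction terms carry the factor $\sum_{\ell=0}^L (L - 2\ell) = 0$ and drop out, while the main terms pick up $\sum_{\ell=0}^L \zeta^{2\ell-L}$, equal to $L+1$ for $\zeta=1$ and to $(-1)^L(L+1)$ for $\zeta=-1$. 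Writing $\gamma_0^{+}$ and $\gamma_0^{-}$ for the $\zeta=\pm1$ parts of the invariant coefficient, so that $\gamma_0 = \lim_{\bs{b}\to\bs{a}}\sum_{(K,I)}\sum_\zeta \gamma_0(W,K,I,\zeta) = \gamma_0^{+} + \gamma_0^{-}$, this yields the clean identity $\gamma_{0,L} = (L+1)\big(\gamma_0^{+} + (-1)^L \gamma_0^{-}\big)$.

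It then remains to evaluate $\gamma_0^{\pm}$ according to the parity of the $d_k$. If some $d_k$ is odd, then $\zeta=-1$ can occur only when $a_{K,I}$ is even, and the denominator factors of $H_{W,K,I,-1}(\bs{b}_\Theta,t)$ that vanish at $t=1$ are those indexed by the even weights other than $(K,I)$; since at least one odd $V_{d_k}$ contributes at least two odd weights, there are at most $D-3$ such factors, whence the pole order is at most $D-4 < D-3$ and $\gamma_0^{-}=0$, giving $\gamma_{0,L}=(L+1)\gamma_0$. If instead all $d_k$ are even, every weight is even, so $\zeta^{\pm a_{k,i}}=1$ for $\zeta=\pm1$ and consequently $H_{W,K,I,-1}(\bs{b}_\Theta,t) = H_{W,K,I,1}(\bs{b}_\Theta,t)$ term by term; thus $\gamma_0^{+}=\gamma_0^{-}=\tfrac12\gamma_0$, giving $\gamma_{0,L}=\tfrac12\big(1+(-1)^L\big)(L+1)\gamma_0$. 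Substituting the Schur-function expression $\gamma_0 = s_\rho(\bs{a})/s_\delta(\bs{a})$ from \cite{CayresPintoHerbigHerdenSeaton} completes both formulas. As a conceptual check, $\gamma_{0,L}$ should equal the rank of the module of covariants over $\C[W]^{\SL_2}$, namely $\dim V_L^{H}$ for the generic stabilizer $H$: this is $L+1$ when $H$ is trivial (some $d_k$ odd) and $(1+(-1)^L)(L+1)/2$ when $-\mathrm{Id}\in H$ (all $d_k$ even).

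The main obstacle I anticipate is the bookkeeping of the subleading $(1-t)^{2-D}$ pole: the individual terms $H_{W,K,I,\zeta}$ genuinely have pole order $D-2$, and the drop to $D-3$ occurs only after summing over $(K,I)$ and passing to the limit $\bs{b}\to\bs{a}$. I must therefore organize the vanishing of the correction terms via $\sum_{\ell=0}^L(L-2\ell)=0$ \emph{before} invoking that cancellation, and handle the removable singularities exactly as in Theorem~\ref{thrm:CovarUnivarHilbSer}. The parity argument isolating $\gamma_0^{-}$ is the other delicate point, as it is what separates the two cases of the statement.
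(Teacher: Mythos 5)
Your proposal is correct and follows essentially the same route as the paper's proof: the same master expansion \eqref{eq:HilbCovarFullSum}--\eqref{eq:HilbCovarFullSumExpanded}, the same reduction to $\zeta = \pm 1$ via the exceptional cases of \cite[Section 4.1]{CayresPintoHerbigHerdenSeaton}, the same evaluation of the sums $\sum_{\ell=0}^L \zeta^{2\ell-L}$ and $\sum_{\ell=0}^L (L-2\ell)=0$, and the same appeal to \cite[Theorem 1.1]{CayresPintoHerbigHerdenSeaton} for the Schur expression of $\gamma_0$. If anything you are more careful than the paper: you explicitly track the genuine $(1-t)^{2-D}$ poles of the individual terms $H_{W,K,I,\zeta}$ and cancel their contribution to $\gamma_{0,L}$ term-by-term via $\sum_{\ell=0}^L(L-2\ell)=0$ (and via pole-order bounds for $\zeta\neq\pm1$), whereas Equation~\eqref{eq:HilbCovarFullSumExpanded} silently writes each $H_{W,K,I,\zeta}$ as beginning in degree $3-D$; your direct even-weight count showing $\gamma_0(W,K,I,-1)=0$ when some $d_k$ is odd likewise supplies in-line what the paper only cites.
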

Note in particular that when all $d_k$ are even and $L$ is odd, $\gamma_{0,L} = 0$; in fact
$\Hilb_{(\C[W]\otimes V_L)^{\SL_2}}(t) = 0$ in this case by Remark~\ref{cor:ParityVanishing}.
\begin{proof}
From Equation~\eqref{eq:HilbCovarFullSum}, we have
\[
    \gamma_{0,L}
    =
    \lim\limits_{\bs{b}\to\bs{a}}
        \sum\limits_{\ell=0}^L  \sum\limits_{(K,I)\in\Lambda}
        \sum\limits_{\zeta^{a_{K,I}} = 1}
            \zeta^{2\ell-L} \gamma_0(W,K,I,\zeta).
\]
As explained in \cite[Section 4.1]{CayresPintoHerbigHerdenSeaton}, excluding the noted exceptions,
if any $d_k$ are odd, then $\gamma_0(W,K,I,\zeta) = 0$ unless $\zeta = 1$, so that
\[
    \gamma_0 = \lim\limits_{\bs{b}\to\bs{a}} \sum\limits_{(K,I)\in\Lambda} \gamma_0(W,K,I,1).
\]
If all $d_k$ are even, then $\gamma_0(W,K,I,-1) = \gamma_0(W,K,I,1)$ and all $\gamma_0(W,K,I,\zeta) = 0$
for $\zeta\neq\pm 1$, i.e.
\[
    \gamma_0
    =   \lim\limits_{\bs{b}\to\bs{a}} \sum\limits_{(K,I)\in\Lambda}
            \big( \gamma_0(W,K,I,1) + \gamma_0(W,K,I,-1) \big)
    =   \lim\limits_{\bs{b}\to\bs{a}} 2 \sum\limits_{(K,I)\in\Lambda}
            \gamma_0(W,K,I,1).
\]
Hence, when at least one $d_k$ is odd,
\[
    \gamma_{0,L}
    =
    \lim\limits_{\bs{b}\to\bs{a}}
        \sum\limits_{\ell=0}^L  \sum\limits_{(K,I)\in\Lambda} \gamma_0(W,K,I,1)
    =
    \sum\limits_{\ell=0}^L \gamma_0
    =
    (L + 1) \gamma_0.
\]
When all $d_k$ are even,
\begin{align*}
    \gamma_{0,L}
    &=
    \lim\limits_{\bs{b}\to\bs{a}}
        \sum\limits_{\ell=0}^L  \sum\limits_{(K,I)\in\Lambda}
        \big( \gamma_0(W,K,I,1) + (-1)^{2\ell-L}\gamma_0(W,K,I,-1) \big)
    \\&=
    \frac{1}{2} \sum\limits_{\ell=0}^L  \gamma_0 + (-1)^L \gamma_0
    \quad\quad\quad =
    \frac{\big( 1 + (-1)^L \big)\big( L + 1 \big)\gamma_0}{2}.
\end{align*}
The expressions for $\gamma_0$ in terms of Schur polynomials are given in
\cite[Theorem 1.1]{CayresPintoHerbigHerdenSeaton}, completing the proof.
\end{proof}


\begin{theorem}
\label{thrm:CovarGamma1}
Let $W = \bigoplus_{k=1}^r V_{d_k}$ be a unitary $\SL_2$-representation with $W^{\SL_2} = \{0\}$
and $d_1\leq d_2\leq\cdots\leq d_r$, and assume that $W$ is not isomorphic to $V_d$ for $d \leq 4$,
$2V_1$, $V_1\oplus V_2$, nor $2V_2$.
Let $L\in\Z^+$. If at least one $d_k$ is odd, the degree $4-D$ coefficient
$\gamma_{1,L}$ of the Laurent series of $\Hilb_{(\C[W]\otimes V_L)^{\SL_2}}(t)$ is given by
\[
    \gamma_{1,L}    =   (L + 1)\gamma_1
                    =   \frac{3(L + 1) s_\rho(\bs{a}) }{2 s_\delta(\bs{a}) },
\]
where $\rho = (C-3, C-3, C-3, C-4, \ldots, 1,0)$ and $\delta = (C-1, C-2, \ldots, 1, 0)$.
If all $d_k$ are even, then
\[
    \gamma_{1,L}    =   \frac{\big(1 + (-1)^L\big)(L + 1)\gamma_1 }{2}
                    =   \frac{3\big(1 + (-1)^L\big)(L + 1) s_\rho(\bs{a}) }{2 s_\delta(\bs{a}) }.
\]
\end{theorem}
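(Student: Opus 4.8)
The plan is to follow the template of the proof of Theorem~\ref{thrm:CovarGamma0} almost verbatim, extracting now the $(1-t)^{4-D}$ coefficient from the product expansion in Equation~\eqref{eq:HilbCovarFullSumExpanded}. Reading off that coefficient, I see that $\gamma_{1,L}$ receives exactly two contributions: the \emph{diagonal} term, in which the constant $1$ from the series for $t^{(2\ell-L)/b_{K,I}}$ multiplies $\gamma_1(W,K,I,\zeta)(1-t)^{4-D}$, and a \emph{cross} term, in which the linear coefficient $\frac{L-2\ell}{b_{K,I}}(1-t)$ multiplies $\gamma_0(W,K,I,\zeta)(1-t)^{3-D}$. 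Thus
\[
    \gamma_{1,L}
    =
    \lim_{\bs{b}\to\bs{a}}
        \sum_{\ell=0}^L \sum_{(K,I)\in\Lambda} \sum_{\zeta^{a_{K,I}}=1}
        \zeta^{2\ell-L}
        \left(
            \gamma_1(W,K,I,\zeta)
            + \frac{L-2\ell}{b_{K,I}}\,\gamma_0(W,K,I,\zeta)
        \right).
\]

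The key observation, and the step that makes the whole computation collapse, is that the cross term vanishes identically. By the vanishing results imported from \cite[Section 4.1]{CayresPintoHerbigHerdenSeaton} (valid precisely once the listed exceptions are removed), the summands with $\zeta\neq\pm 1$ contribute nothing to either $\gamma_0(W,K,I,\zeta)$ or $\gamma_1(W,K,I,\zeta)$. For the surviving values $\zeta=\pm 1$, the factor $\zeta^{2\ell-L}$ is constant in $\ell$ (equal to $1$ when $\zeta=1$ and to $(-1)^L$ when $\zeta=-1$), so the entire $\ell$-dependence of the cross term sits in $(L-2\ell)$. Since $\sum_{\ell=0}^L (L-2\ell)=0$ by symmetry about $\ell=L/2$, each cross term drops out before the limit $\bs{b}\to\bs{a}$ is taken; in particular the apparent pole $1/b_{K,I}$ is harmless. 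Hence only the diagonal $\gamma_1(W,K,I,\zeta)$ terms remain, and the remaining analysis is identical to the $\gamma_0$ case. When at least one $d_k$ is odd, only $\zeta=1$ survives and $\gamma_{1,L}=\sum_{\ell=0}^L\gamma_1=(L+1)\gamma_1$; when all $d_k$ are even, the identities $\gamma_1(W,K,I,-1)=\gamma_1(W,K,I,1)$ and $\lim_{\bs{b}\to\bs{a}}\sum_{(K,I)\in\Lambda}\gamma_1(W,K,I,1)=\tfrac12\gamma_1$, together with the value $(-1)^L$ of $\zeta^{2\ell-L}$ at $\zeta=-1$, yield $\gamma_{1,L}=\tfrac12\big(1+(-1)^L\big)(L+1)\gamma_1$. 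Substituting the Schur-polynomial formula $\gamma_1=\tfrac{3}{2}\,s_\rho(\bs{a})/s_\delta(\bs{a})$ from \cite[Theorem 1.1]{CayresPintoHerbigHerdenSeaton} finishes both cases.

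I do not expect a genuine obstacle in the main argument; the single point requiring care is bookkeeping of the exceptional low-dimensional representations. The cleanness of the result hinges on the two facts that (i) $\gamma_0$ and $\gamma_1$ are supported on $\zeta=\pm 1$, and (ii) the cross term telescopes to zero, and both of these can fail for the small modules listed in Remark~\ref{rem:CovarExceptions}. This is exactly why $V_1\oplus V_2$ and $2V_2$ must be excluded here even though $2V_2$ was not an exception for $\gamma_1$ itself: for $2V_2$ the additional residue-at-zero sum in Equation~\eqref{eq:CovarUnivarHilbSerZeroRes} begins to contribute at the order of $(1-t)^{4-D}$, so that case falls outside the uniform treatment and must instead be checked directly using the algorithm of Subsection~\ref{subsec:CovarHilbSerUnivar}.
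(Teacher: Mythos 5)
Your proposal follows the same skeleton as the paper's proof: the same expansion from Equation~\eqref{eq:HilbCovarFullSumExpanded}, the same split into a diagonal $\gamma_1$ term and a cross term with $\gamma_0$, and the same cancellation $\sum_{\ell=0}^L (L-2\ell)=0$ disposing of the cross term. However, there is a genuine gap in your case analysis. You claim that whenever at least one $d_k$ is odd, ``only $\zeta=1$ survives,'' i.e.\ $\gamma_1(W,K,I,\zeta)=0$ for $\zeta\neq 1$. The vanishing results of \cite[Section 4.1]{CayresPintoHerbigHerdenSeaton} give this for $\gamma_1$ only when at least \emph{two} $d_k$ are odd or some odd $d_k$ exceeds $1$. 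The remaining case --- $d_1=1$ and all other $d_k$ even, e.g.\ $W=V_1\oplus V_4$ or $W=V_1\oplus 2V_2$, which satisfies ``at least one $d_k$ odd'' and is not on the exception list --- behaves differently: there $\gamma_1(W,K,I,-1)$ need not vanish termwise, and $\gamma_1(W,K,I,-1)\neq\gamma_1(W,K,I,1)$ in general. Applied verbatim to such $W$, your argument silently discards nonzero $\zeta=-1$ contributions.

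The paper devotes a separate Case III to exactly this situation. There the diagonal term is $(L+1)\lim_{\bs{b}\to\bs{a}}\sum_{(K,I)\in\Lambda}\bigl(\gamma_1(W,K,I,1)+(-1)^L\gamma_1(W,K,I,-1)\bigr)$, and the conclusion $\gamma_{1,L}=(L+1)\gamma_1$ follows only from the nontrivial identity $\sum_{(K,I)\in\Lambda}\gamma_1(W,K,I,-1)=0$, imported from the proof of \cite[Theorem 5.5]{CayresPintoHerbigHerdenSeaton}. It is a cancellation of the sum over $(K,I)$, not a termwise vanishing, and it is the missing ingredient in your write-up; without it, the first displayed formula of the theorem is unproven precisely for the family $W=V_1\oplus(\text{even irreducibles})$. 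The rest of your proposal --- the cross-term cancellation, Case II with all $d_k$ even, and your explanation of why $2V_2$ must be excluded via the residue-at-zero sum in Equation~\eqref{eq:CovarUnivarHilbSerZeroRes} --- matches the paper and is correct.
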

\begin{proof}
From Equation~\eqref{eq:HilbCovarFullSumExpanded}, we express
\[
    \gamma_{1,L}
    =
    \lim\limits_{\bs{b}\to\bs{a}}
        \sum\limits_{\ell=0}^L  \sum\limits_{(K,I)\in\Lambda}
        \sum\limits_{\zeta^{a_{K,I}} = 1}
            \zeta^{2\ell-L}
            \Big( \frac{L - 2\ell}{b_{K,I}} \gamma_0(W,K,I,\zeta) + \gamma_1(W,K,I,\zeta)\Big).
\]
We consider three cases.


\noindent\textbf{Case I:} Assume that at least two $d_k$ are odd or one $d_k > 1$ is odd. Then for $m=0,1$,
we have $\gamma_m(W,K,I,\zeta) = 0$ unless $\zeta = 1$ so that
\[
    \gamma_m = \lim\limits_{\bs{b}\to\bs{a}} \sum\limits_{(K,I)\in\Lambda} \gamma_m(W,K,I,1),
    \quad\quad
    m = 0, 1.
\]
Hence,
\begin{align*}
    \gamma_{1,L}
    &=
    \lim\limits_{\bs{b}\to\bs{a}} \sum\limits_{\ell=0}^L  \sum\limits_{(K,I)\in\Lambda}
        \Big( \frac{L - 2\ell}{b_{K,I}} \gamma_0(W,K,I,1) + \gamma_1(W,K,I,1)\Big)
    \\&=
    (L + 1) \gamma_1
    + \lim\limits_{\bs{b}\to\bs{a}}
        \sum\limits_{(K,I)\in\Lambda} \frac{\gamma_0(W,K,I,1)}{b_{K,I}}
        \sum\limits_{\ell=0}^L (L - 2\ell)
    \\&=
    (L + 1) \gamma_1,
\end{align*}
as $\sum_{\ell=0}^L (L - 2\ell) = 0$.


\noindent\textbf{Case II:} Assume that all $d_k$ are even. Then for $m = 0, 1$, we have
$\gamma_0(W,K,I,1) = \gamma_0(W,K,I,-1)$ and all other $\gamma_0(W,K,I,\zeta) = 0$. Therefore,
\[
    \gamma_m = \lim\limits_{\bs{b}\to\bs{a}} 2\sum\limits_{(K,I)\in\Lambda} \gamma_m(W,K,I,1),
    \quad\quad
    m = 0, 1,
\]
and
\begin{align*}
    \gamma_{1,L}
    &=
    \lim\limits_{\bs{b}\to\bs{a}}
        \sum\limits_{\ell=0}^L  \sum\limits_{(K,I)\in\Lambda}
        \big( 1 + (-1)^{2\ell-L} \big)
            \Big( \frac{L - 2\ell}{b_{K,I}} \gamma_0(W,K,I,1) + \gamma_1(W,K,I,1)\Big)
    \\&=
    \lim\limits_{\bs{b}\to\bs{a}}
    \big( 1 + (-1)^{2\ell-L} \big)(L+1)\sum\limits_{(K,I)\in\Lambda} \gamma_1(W,K,I,1)
        +
        \sum\limits_{(K,I)\in\Lambda} \frac{\gamma_0(W,K,I,1)}{b_{K,I}}
        \sum\limits_{\ell=0}^L\big( L - 2\ell \big)
    \\&=
    \frac{\big( 1 + (-1)^L \big)\big( L + 1 \big)\gamma_1 }{2} ,
\end{align*}
again as $\sum_{\ell=0}^L \big( L - 2\ell \big) = 0$.


\noindent\textbf{Case III:} Assume $d_1 = 1$ and $d_k$ is even for $k > 1$. Then $\gamma_0(W,K,I,\zeta) = 0$
for $\zeta \neq 1$, $\gamma_1(W,K,I,\zeta) = 0$ for $\zeta \neq \pm 1$,
but $\gamma_1(W,K,I,1)$ and $\gamma_1(W,K,I,-1)$ are not necessarily equal. We compute
\begin{align*}
    \gamma_{1,L}
    &=
    \lim\limits_{\bs{b}\to\bs{a}}
    \sum\limits_{\ell=0}^L  \sum\limits_{(K,I)\in\Lambda}
            \Big( \frac{L - 2\ell}{b_{K,I}} \gamma_0(W,K,I,1) + \gamma_1(W,K,I,1)
            + (-1)^{2\ell-L}\gamma_1(W,K,I,-1)
            \Big)
    \\&=
    \lim\limits_{\bs{b}\to\bs{a}}
    \sum\limits_{\ell=0}^L \Big( \sum\limits_{(K,I)\in\Lambda} \gamma_1(W,K,I,1) + (-1)^{L}\gamma_1(W,K,I,-1) \Big)
    + \sum\limits_{(K,I)\in\Lambda}
        \frac{\gamma_0(W,K,I,1)}{b_{K,I}} \sum\limits_{\ell=0}^L L - 2\ell
    \\&=
    \lim\limits_{\bs{b}\to\bs{a}}
    (L + 1)\Big( \sum\limits_{(K,I)\in\Lambda} \gamma_1(W,K,I,1)
        + (-1)^{L} \sum\limits_{(K,I)\in\Lambda} \gamma_1(W,K,I,-1) \Big).
\end{align*}
Now, in the proof of \cite[Theorem 5.5]{CayresPintoHerbigHerdenSeaton},
it is demonstrated that $\sum_{(K,I)\in\Lambda} \gamma_1(W,K,I,-1) = 0$, hence
\[
    \gamma_{1,L}
    =
    \lim\limits_{\bs{b}\to\bs{a}}
    (L + 1)\Big( \sum\limits_{(K,I)\in\Lambda} \gamma_1(W,K,I,1) \Big)
    =
    (L + 1)\gamma_1.
    \qedhere
\]
\end{proof}


\begin{theorem}
\label{thrm:CovarGamma2}
Let $W = \bigoplus_{k=1}^r V_{d_k}$ be a unitary $\SL_2$-representation with $W^{\SL_2} = \{0\}$
and $d_1\leq d_2\leq\cdots\leq d_r$, and assume that $W$ is not isomorphic to
$V_d$ for $d = 1,2,3,4,5,6,8$, $2V_1$, $V_1\oplus V_2$, $V_1\oplus V_3$, $V_1\oplus V_4$, $2V_2$, $V_2\oplus V_3$, $V_2\oplus V_4$,
$2V_3$, nor $2V_4$. Let $L\in\Z^+$.

If at least two $d_k$ are odd or one $d_k > 1$ is odd, the degree $5-D$ coefficient
$\gamma_{2,L}$ of the Laurent series of $\Hilb_{(\C[W]\otimes V_L)^{\SL_2}}(t)$ is given by
\[
    \gamma_{2,L}
    =
    (L + 1) \gamma_2
        - \frac{ L(L + 1)(L + 2)s_{\rho^\prime}(\bs{a}) }{ 6 s_\delta(\bs{a}) },
\]
where $\rho^\prime = (C-3, C-4, C-4, C-4, C-5, \ldots, 1,0)$ and $\delta = (C-1, C-2, \ldots, 1, 0)$.
If all $d_k$ are even, then
\[
    \gamma_{2,L}
    =
    \big( 1 + (-1)^L \big)
    \left( \frac{L + 1}{2} \gamma_2
        - \frac{ L(L + 1)(L + 2)s_{\rho^\prime}(\bs{a}) }{ 6 s_\delta(\bs{a}) } \right).
\]
If $d_1 = 1$ and each $d_k$ is even for $k > 1$, then
\begin{align*}
    \gamma_{2,L}
    &=
    (L + 1)\left(\frac{42 s_{\rho}(\bs{a}) + s_{\rho^\prime}(\bs{a})
        \big( P_2(\bs{a})  - 8 - 4 L(L + 2)\big) }
            {24 s_\delta(\bs{a})}
        + (-1)^{L} \frac{s_{C - 4, C - 4, C - 4, C - 5, \ldots, 1, 0}(\bs{a_1}) }
            {4 s_{C - 2, C - 3, C - 4 \ldots, 1, 0}(\bs{a_1}) } \right),
\end{align*}
where $\rho = (C-3, C-3, C-3, C-4, \ldots, 1,0)$, $P_2$ is the quadratic power sum, and
$\bs{a}_1$ denotes $\bs{a}$ with entry $a_{1,1}$ removed.
\end{theorem}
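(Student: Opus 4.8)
The plan is to extract $\gamma_{2,L}$ directly from Equation~\eqref{eq:HilbCovarFullSumExpanded} as the coefficient of $(1-t)^{5-D}$. Collecting the three ways of producing this power---the constant term of the $t^{(2\ell-L)/b_{K,I}}$ expansion paired with $\gamma_2$, the linear term paired with $\gamma_1$, and the quadratic term paired with $\gamma_0$---gives
\[
\gamma_{2,L}
=
\lim_{\bs{b}\to\bs{a}}
\sum_{\ell=0}^L \sum_{(K,I)\in\Lambda} \sum_{\zeta^{a_{K,I}}=1}
\zeta^{2\ell-L}
\Big(
\frac{(L-2\ell)(b_{K,I}+L-2\ell)}{2b_{K,I}^2}\gamma_0(W,K,I,\zeta)
+ \frac{L-2\ell}{b_{K,I}}\gamma_1(W,K,I,\zeta)
+ \gamma_2(W,K,I,\zeta)
\Big).
\]
The computation then rests on the two elementary moment identities $\sum_{\ell=0}^L (L-2\ell)=0$ and $\sum_{\ell=0}^L (L-2\ell)^2 = \tfrac{1}{3}L(L+1)(L+2)$, the first of which was already the engine behind Theorem~\ref{thrm:CovarGamma1}.

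Next I would split into the same three parity regimes as in the statement and invoke the vanishing and pairing properties of the $\gamma_m(W,K,I,\zeta)$ recorded in \cite[Section~4.1]{CayresPintoHerbigHerdenSeaton}: when some $d_k>1$ is odd or two $d_k$ are odd, only $\zeta=1$ survives for $m\le 2$; when all $d_k$ are even the $\zeta=\pm1$ terms coincide and the rest vanish; and when $d_1=1$ with the remaining $d_k$ even, $\gamma_0$ survives only at $\zeta=1$ while $\gamma_1,\gamma_2$ survive at $\zeta=\pm1$. In every case the $\gamma_1$ contribution drops out because it is weighted by the odd moment $\sum_\ell(L-2\ell)=0$; since this holds regardless of $\zeta$, Case~III here needs no analogue of the identity $\sum\gamma_1(W,K,I,-1)=0$ that was required in the proof of Theorem~\ref{thrm:CovarGamma1}. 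The $\gamma_2$ contribution collapses to $(L+1)$ times $\lim\sum_{(K,I)}\gamma_2(W,K,I,\pm1)$, and the $\gamma_0$ contribution collapses, through the second moment, to $\tfrac16 L(L+1)(L+2)$ times $\lim\sum_{(K,I)}\gamma_0(W,K,I,1)/b_{K,I}^2$; in Case~II each of these carries the expected overall factor $1+(-1)^L$ coming from the sum over $\zeta=\pm1$. The handful of low-dimensional $W$ listed as exceptions are precisely those for which these uniform vanishing statements fail, and are handled separately as in Remark~\ref{rem:CovarExceptions}.

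It remains to evaluate the two surviving weighted sums in terms of Schur polynomials. The sums $\lim\sum_{(K,I)}\gamma_2(W,K,I,1)$ and $\lim\sum_{(K,I)}\gamma_2(W,K,I,-1)$ are the $\zeta=1$ and $\zeta=-1$ pieces of the invariant coefficient $\gamma_2$ computed in \cite{CayresPintoHerbigHerdenSeaton}; I would borrow those determinantal (bialternant) evaluations, taking care in Case~III to keep the two pieces separate, which is what produces both the $s_\rho,s_{\rho^\prime},P_2$ terms (from $\zeta=1$) and the separate Schur ratio in the reduced variable list $\bs{a}_1$ (from $\zeta=-1$). The genuinely new ingredient, and the step I expect to be the main obstacle, is the second-moment identity
\[
\lim_{\bs{b}\to\bs{a}} \sum_{(K,I)\in\Lambda} \frac{\gamma_0(W,K,I,1)}{b_{K,I}^2}
= -\frac{s_{\rho^\prime}(\bs{a})}{s_\delta(\bs{a})},
\qquad \rho^\prime = (C-3,C-4,C-4,C-4,C-5,\ldots,1,0).
\]
This quantity does not arise in the invariant computation, where the factor $t^{(2\ell-L)/b_{K,I}}$ is identically $1$; it is forced here only by the nonzero exponents $2\ell-L$. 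To prove it I would insert the explicit form of $\gamma_0(W,K,I,1)$ from \cite{CayresPintoHerbigHerdenSeaton}, multiply by $b_{K,I}^{-2}$, and recognize the resulting sum over $(K,I)$ as a ratio of alternants; lowering the exponent by $2$ shifts the defining partition from the $\rho$ giving $\gamma_0$ to $\rho^\prime$ and accounts for the sign.

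The delicate points in this final step are controlling the $\bs{b}\to\bs{a}$ limit through the degenerate (non-distinct-weight) configurations, exactly as in the removability argument preceding Theorem~\ref{thrm:CovarUnivarHilbSer}, and verifying that the same bialternant evaluation holds uniformly across the even and odd regimes, so that the coefficient of $L(L+1)(L+2)$ is the single expression $-s_{\rho^\prime}(\bs{a})/s_\delta(\bs{a})$ in all three cases despite the zeroth moment $\lim\sum_{(K,I)}\gamma_0(W,K,I,1)$ differing by a factor of two between them.
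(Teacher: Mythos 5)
Your proposal follows essentially the same route as the paper's proof: the same extraction of $\gamma_{2,L}$ from Equation~\eqref{eq:HilbCovarFullSumExpanded}, the same moment identities $\sum_{\ell=0}^L(L-2\ell)=0$ and $\sum_{\ell=0}^L(L-2\ell)^2=\tfrac13 L(L+1)(L+2)$, the same three-case split via the vanishing/pairing properties of the $\gamma_m(W,K,I,\zeta)$ from \cite{CayresPintoHerbigHerdenSeaton}, and the same evaluation of the new second-moment sum $\lim_{\bs{b}\to\bs{a}}\sum_{(K,I)}\gamma_0(W,K,I,1)/b_{K,I}^2$ by inserting the explicit form of $\gamma_0(W,K,I,1)$ and reducing to alternants (the paper does this through the $\Sigma_{R,S}$ functions and Lemma~5.2 of that reference, obtaining $s_{C-5}=0$ and $s_{C-6}=s_{\rho^\prime}$). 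Your uniform value $-s_{\rho^\prime}(\bs{a})/s_\delta(\bs{a})$ for that sum, including in the $d_1=1$ case, is the one consistent with the theorem's stated formulas, so the proposal is correct and essentially identical in method.
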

\begin{proof}
Again using Equation~\eqref{eq:HilbCovarFullSumExpanded}, we can express $\gamma_{2,L}$ as
\[
    \lim\limits_{\bs{b}\to\bs{a}}
        \sum\limits_{\ell=0}^L  \sum\limits_{(K,I)\in\Lambda}
        \sum\limits_{\zeta^{a_{K,I}} = 1}
            \zeta^{2\ell-L}
            \left( \frac{(L - 2\ell)(b_{K,I} + L - 2\ell)}{2b_{K,I}^2} \gamma_0(W,K,I,\zeta)
            + \frac{L - 2\ell}{b_{K,I}} \gamma_1(W,K,I,\zeta) + \gamma_2(W,K,I,\zeta)\right).
\]
As explained in \cite[Section 4.1]{CayresPintoHerbigHerdenSeaton}, except for the listed exceptions,
we have that $\gamma_0(W,K,I,\zeta) = \gamma_1(W,K,I,\zeta) = \gamma_2(W,K,I,\zeta) = 0$
unless $\zeta = \pm 1$. We consider three cases.


\noindent\textbf{Case I:} Assume that at least two $d_k$ are odd or one $d_k > 1$ is odd.
Then $\gamma_i(W,K,I,\zeta) = 0$ for $i = 0,1,2$ unless $\zeta = 1$, so that
\[
    \gamma_m = \lim\limits_{\bs{b}\to\bs{a}} \sum\limits_{(K,I)\in\Lambda} \gamma_m(W,K,I,1),
    \quad\quad
    m = 0, 1, 2.
\]
Therefore,
\begin{align*}
    \gamma_{2,L}
    &=
    \lim\limits_{\bs{b}\to\bs{a}}
        \sum\limits_{\ell=0}^L  \sum\limits_{(K,I)\in\Lambda}
        \left( \frac{(L - 2\ell)(b_{K,I} + L - 2\ell)}{2b_{K,I}^2} \gamma_0(W,K,I,1)
            + \frac{L - 2\ell}{b_{K,I}} \gamma_1(W,K,I,1) + \gamma_2(W,K,I,1)\right)
    \\&=
    (L + 1) \gamma_2
        + \frac{L(L + 1)(L + 2)}{6} \lim\limits_{\bs{b}\to\bs{a}}
            \sum\limits_{(K,I)\in\Lambda} \frac{\gamma_0(W,K,I,1)}{b_{K,I}^2}.
\end{align*}
We are able to compute the sum over $(K,I)\in\Lambda$ using the computations of
\cite{CayresPintoHerbigHerdenSeaton}. First, using Proposition~4.2 of that reference,
\[
    \sum\limits_{(K,I)\in\Lambda} \gamma_0(W,K,I,1)
    =
    \sum\limits_{(K,I) \in \Lambda}
        \frac{b_{K,I}^{D-3} - 2 b_{K,I}^{D-4}
            - \sum\limits_{\substack{ (\kappa,\lambda)\in\Theta\smallsetminus \\ \{(K,I)\} }}
                b_{K,I}^{D-4} b_{\kappa,\lambda} }
        {\prod\limits_{\substack{(k,i)\in\Theta\smallsetminus \\ \{(K,I)\} }}
            (b_{K,I} - b_{k,i}) } .
\]
Dividing each term by $b_{K,I}^2$ and using the functions
\[
    \Sigma_{R,S}(\bs{b}_\Theta)
        :=
        \sum\limits_{(K,I) \in \Lambda} \sum\limits_{\substack{(\kappa,\lambda)\in\Theta\smallsetminus \\ \{(K,I)\}}}
            \frac{b_{K,I}^{R} b_{\kappa,\lambda}^S }
                {\prod\limits_{\substack{(k,i)\in\Theta\smallsetminus \\ \{ (K,I)\} }}
                    (b_{K,I} - b_{k,i}) }
    \quad\quad\mbox{and}\quad\quad
    \Sigma_{R}(\bs{b}_\Theta)
        :=
        \sum\limits_{(K,I) \in \Lambda}
            \frac{ b_{K,I}^{R} }
                {\prod\limits_{\substack{(k,i)\in\Theta\smallsetminus \\ \{ (K,I)\} }}
                    (b_{K,I} - b_{k,i}) }
\]
defined in \cite[Section 5]{CayresPintoHerbigHerdenSeaton}, we have
\[
    \sum\limits_{(K,I)\in\Lambda} \frac{\gamma_0(W,K,I,1)}{b_{K,I}^2}
    =
    \Sigma_{D-5}(\bs{b}) - 2 \Sigma_{D-6}(\bs{b}) - \Sigma_{D-6,1}(\bs{b}).
\]
Then by \cite[Lemma 5.2]{CayresPintoHerbigHerdenSeaton}, specifically Equations~(34) and (35), we have
\[
    \sum\limits_{(K,I)\in\Lambda} \frac{\gamma_0(W,K,I,1)}{b_{K,I}^2}
    =
    \frac{2s_{D-e-C-5}(\bs{b})
        -
        2 s_{D-e-C-6}(\bs{b})
        -
        P_1(\bs{b}_\Theta) s_{D-e-C-6}(\bs{b})
        + s_{D-e-C-5}(\bs{b})}
        {s_\delta(\bs{b})},
\]
where $P_1$ denotes the power sum of degree $1$, and we recall the shorthand
$s_{m}(\bs{b})$ denotes $s_{m,C-2,C-3,\ldots,1,0}(\bs{b})$. As the entries of
$\bs{b}_\Theta$ are either $0$ or occur
in positive and negative pairs, $P_1(\bs{b}_\Theta) = 0$; using this and the fact that $D - e - C = C$,
\[
    \sum\limits_{(K,I)\in\Lambda} \frac{\gamma_0(W,K,I,1)}{b_{K,I}^2}
    =
    \frac{s_{C-5}(\bs{b}) - s_{C-6}(\bs{b})}{s_\delta(\bs{b})} .
\]
The non-standard Schur polynomial $s_{C - 5}(\bs{a})$ is defined in terms of the alternant associated to
$\delta+(C-5,C-2,C-3,\ldots,1,0) = (2C-6,2C-4,2C-6,\ldots,2,0)$ and hence vanishes. Rewriting the non-standard
Schur polynomial $s_{C-6}(\bs{b}) = s_{C-6,C-2,C-3,\ldots,1,0}(\bs{b})$ in standard form by permuting columns
yields $s_{C-6}(\bs{b}) = s_{C-3, C-4, C-4, C-4, C-5,\ldots,1,0}(\bs{b})$. Applying these observations completes
the proof in this case.


\noindent\textbf{Case II:} Assume that each $d_k$ is even. Then for $i = 0,1,2$,
$\gamma_i(W,K,I,\zeta) = 0$ unless $\zeta = \pm 1$, and $\gamma_i(W,K,I,1) = \gamma_i(W,K,I,-1)$.
\[
    \gamma_m = \lim\limits_{\bs{b}\to\bs{a}} 2\sum\limits_{(K,I)\in\Lambda} \gamma_m(W,K,I,1),
    \quad\quad
    m = 0, 1, 2.
\]
Hence $\gamma_{2,L}$ is given by
\begin{align*}
    &\lim\limits_{\bs{b}\to\bs{a}}
    \sum\limits_{\ell=0}^L  \sum\limits_{(K,I)\in\Lambda}
        \big( 1 + (-1)^{2\ell-L} \big)
            \left( \frac{(L - 2\ell)(b_{K,I} + L - 2\ell)}{2b_{K,I}^2} \gamma_0(W,K,I,1)
            + \frac{L - 2\ell}{b_{K,I}} \gamma_1(W,K,I,1) + \gamma_2(W,K,I,1)\right)
    \\&=
    \big( 1 + (-1)^L \big)
    \left( \frac{L + 1}{2} \gamma_2
        + \frac{L(L + 1)(L + 2)}{6} \lim\limits_{\bs{b}\to\bs{a}}
            \sum\limits_{(K,I)\in\Lambda} \frac{\gamma_0(W,K,I,1)}{b_{K,I}^2} \right).
\end{align*}
This is identical to the previous case except for the $1 + (-1)^L$ prefactor and the
change to the coefficient of $\gamma_2$, but the remaining argument is identical.


\noindent\textbf{Case III:}
Assume that $d_1 = 1$ and $d_k$ is even for $k > 1$. Then $\gamma_0(W,K,I,\zeta) = 0$ for $\zeta\neq 1$,
and for $m = 1,2$, $\gamma_m(W,K,I,\zeta) = 0$ for $\zeta \neq \pm 1$, yet
$\gamma_m(W,K,I,1)$ and $\gamma_m(W,K,I,-1)$ are not necessarily equal. Hence we can express
$\gamma_{2,L}$ as
\begin{align*}
    &
    \lim\limits_{\bs{b}\to\bs{a}}
        \sum\limits_{\ell=0}^L  \sum\limits_{(K,I)\in\Lambda}
        \sum\limits_{\zeta^{a_{K,I}} = 1}
            \zeta^{2\ell-L}
            \left( \frac{(L - 2\ell)(b_{K,I} + L - 2\ell)}{2b_{K,I}^2} \gamma_0(W,K,I,\zeta)
            + \frac{L - 2\ell}{b_{K,I}} \gamma_1(W,K,I,\zeta) + \gamma_2(W,K,I,\zeta)\right)
    \\&=
    (L+1) \lim\limits_{\bs{b}\to\bs{a}}
        \sum\limits_{(K,I)\in\Lambda} \Big( \gamma_2(W,K,I,1) + (-1)^{L} \gamma_2(W,K,I,-1)\Big)
    \\&\quad\quad\quad\quad\quad\quad
        + \lim\limits_{\bs{b}\to\bs{a}} \sum\limits_{(K,I)\in\Lambda}
            \frac{1}{b_{K,I}} \Big( \gamma_1(W,K,I,1) + (-1)^{L}  \gamma_1(W,K,I,-1) \Big)
            \sum\limits_{\ell=0}^L (L - 2\ell)
    \\&\quad\quad\quad\quad\quad\quad
        + \lim\limits_{\bs{b}\to\bs{a}} \sum\limits_{(K,I)\in\Lambda} \frac{\gamma_0(W,K,I,1)}{2b_{K,I}^2}
            \sum\limits_{\ell=0}^L (L - 2\ell)(b_{K,I} + L - 2\ell)
    \\&=
    (L + 1) \lim\limits_{\bs{b}\to\bs{a}}\sum\limits_{(K,I)\in\Lambda}
        \Big( \gamma_2(W,K,I,1) + (-1)^{L} \gamma_2(W,K,I,-1)\Big)
        +  \frac{L(L+1)(L+2)}{6} \lim\limits_{\bs{b}\to\bs{a}} \sum\limits_{(K,I)\in\Lambda} \frac{\gamma_0(W,K,I,1)}{b_{K,I}^2} .
\end{align*}
From the proof of \cite[Theorem 5.8]{CayresPintoHerbigHerdenSeaton}, we have
\[
    \sum\limits_{(K,I)\in\Lambda} \gamma_2(W,K,I,1)
    =
    \frac{42 s_{C-3,C-3,C-3,C-4,\ldots,1,0}(\bs{a}) + s_{C-3, C-4, C-4, C-4, C-5,\ldots,1,0}(\bs{a})
            \big( P_2(\bs{a})  - 8\big) }
                {24 s_\delta(\bs{a})},
\]
and
\[
    \sum\limits_{(K,I)\in\Lambda} \gamma_2(W,K,I,-1)
    =
    \frac{s_{C - 4, C - 4, C - 4, C - 5, \ldots, 1, 0}(\bs{a_1}) }
        {4 s_{C - 2, C - 3, C - 4 \ldots, 1, 0}(\bs{a_1}) },
\]
and from the computations for Case I, we have
\[
    \sum\limits_{(K,I)\in\Lambda} \frac{\gamma_0(W,K,I,1)}{b_{K,I}^2}
    =
    - \frac{s_{C-3, C-4, C-4, C-4, C-5,\ldots,1,0}(\bs{a}) }{ 2s_\delta(\bs{a}) }.
\]
Combining these observations completes the proof.
\end{proof}


\section{Hilbert Series of the Graded Algebra of Regular Functions on $\SU_2$-Symplectic Quotients}
\label{sec:SympQuots}

In this section, we use the computation of the Laurent coefficients of the
Hilbert series $\Hilb_{(\C[V\oplus V^\ast]\otimes V_2)^{\SL_2}}(t)$ in the previous section,
as well as the Laurent coefficients of $\Hilb_{\C[V\oplus V^\ast]^{\SL_2}}(t)$ computed
in \cite{CayresPintoHerbigHerdenSeaton}, to determine an explicit expression for the
first nonzero Laurent coefficient $\gamma_0^{\on}(V)$ of the Hilbert series $\Hilb_V^{\on}(t)$
of the graded algebra $\R[M_0]$ of regular functions on the symplectic quotient $M_0$
associated to $V$.

First, we note the following. If $(V,\SL_2)$ is $1$-large, i.e. not isomorphic to $V_1$, $2V_1$,
nor $V_1\oplus V_2$, then $\nu_{V,2} \leq 0$, see Equation~\eqref{eq:DefNu}. Hence,
Proposition~\ref{prop:OnShellHilb} and Theorem~\ref{thrm:CovarUnivarHilbSer} yield the following.

\begin{corollary}
\label{cor:OnShellHilb}
Let $V$ be a unitary $\SU_2$-representation with $V^{\SU_2} = \{0\}$ and assume that $V$
is not isomorphic to $V_1$, $2V_1$ nor $V_1\oplus V_2$. Then the on-shell Hilbert series
of the graded algebra of regular functions on the symplectic quotient $M_0$ is given by
\begin{equation}
\label{eq:OnShellHilbSum}
    \Hilb_V^{\on}(t)
    =
    \lim\limits_{\bs{b}\to\bs{a}}
    \sum\limits_{(K,I)\in\Lambda_{V\oplus V^\ast}} \sum\limits_{\zeta^{a_{K,I}} = 1}
    \frac{ (1 - t^6) \big(1 - \zeta^{2} t^{2/b_{K,I}}\big)
        +  (t^4 - t^2)\big(\zeta^{-2} t^{-2/b_{K,I}} - \zeta^{4} t^{4/b_{K,I}} \big)}
    {b_{K,I} \prod\limits_{\substack{(k,i)\in\Theta_{V\oplus V^\ast}\smallsetminus \\ \{(K,I)\}}}
            (1 - \zeta^{-a_{k,i}} t^{(b_{K,I} - b_{k,i})/b_{K,I}})
            },
\end{equation}
where we recall that $\Theta_{V\oplus V^\ast}$ and $\Lambda_{V\oplus V^\ast}$ denote
the sets of weights (respectively positive weights) for the cotangent lifted
representation $V\oplus V^\ast$.
\end{corollary}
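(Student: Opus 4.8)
The plan is to assemble the statement directly from two results already in hand: Proposition~\ref{prop:OnShellHilb}, which writes $\Hilb_V^{\on}(t)$ as a linear combination of an invariant and a $V_2$-covariant Hilbert series, and Theorem~\ref{thrm:CovarUnivarHilbSer} (in the compact form of Remark~\ref{rem:CovarUnivarHilbSerSimplified}), which evaluates each of those two series. Since $V$ satisfies the hypotheses of Proposition~\ref{prop:OnShellHilb}, I would begin from
\[
    \Hilb_V^{\on}(t) = (1 - t^6)\,\Hilb_{\C[V\oplus V^\ast]^{\SL_2}}(t) + (t^4 - t^2)\,\Hilb_{(\C[V\oplus V^\ast]\otimes V_2)^{\SL_2}}(t),
\]
and then recognize the first term as the covariant series for $W = V\oplus V^\ast$ with $L = 0$ (i.e.\ the invariants) and the second as the covariant series for $W = V\oplus V^\ast$ with $L = 2$. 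Note that $V^{\SU_2} = \{0\}$ forces $(V\oplus V^\ast)^{\SL_2} = \{0\}$, so the standing hypothesis of Theorem~\ref{thrm:CovarUnivarHilbSer} is met.

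Next I would verify that the no-extra-residue branch of Theorem~\ref{thrm:CovarUnivarHilbSer}, namely Equation~\eqref{eq:CovarUnivarHilbSerNoZeroRes}, governs both series. This requires the condition $\nu_{W,L}\leq 0$ of that theorem with $W = V\oplus V^\ast$ for both $L = 0$ and $L = 2$; the case $L=2$ holds because $V$ is $1$-large, as recorded immediately before the statement (via Equation~\eqref{eq:DefNu}), while the case $L=0$ is immediate since the sum of positive weights of $V\oplus V^\ast$ exceeds $1$. Consequently no residue-at-zero term contributes, and Remark~\ref{rem:CovarUnivarHilbSerSimplified} presents each series as the limit $\bs{b}\to\bs{a}$ of a double sum over $(K,I)\in\Lambda_{V\oplus V^\ast}$ and $\zeta^{a_{K,I}}=1$, with numerator $\zeta^{-L}t^{-L/b_{K,I}} - \zeta^{L+2}t^{(L+2)/b_{K,I}}$ and common denominator
\[
    b_{K,I}\prod_{\substack{(k,i)\in\Theta_{V\oplus V^\ast}\smallsetminus \\ \{(K,I)\}}}\big(1 - \zeta^{-a_{k,i}}t^{(b_{K,I}-b_{k,i})/b_{K,I}}\big).
\]
Specializing the numerator to $L = 0$ gives $1 - \zeta^2 t^{2/b_{K,I}}$, and to $L = 2$ gives $\zeta^{-2}t^{-2/b_{K,I}} - \zeta^4 t^{4/b_{K,I}}$.

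The key observation is that this denominator depends only on $W = V\oplus V^\ast$, on $(K,I)$, $\zeta$, and $\bs{b}$, but \emph{not} on $L$, so the two series are sums over the identical index set with identical denominators. I would therefore substitute both limit expressions into the displayed formula, pull the prefactors $(1 - t^6)$ and $(t^4 - t^2)$—which are independent of $\bs{b}$ and finite at the limit—inside both the limit and the finite double sum, and combine termwise over the shared denominator. The resulting numerator is precisely
\[
    (1 - t^6)\big(1 - \zeta^2 t^{2/b_{K,I}}\big) + (t^4 - t^2)\big(\zeta^{-2}t^{-2/b_{K,I}} - \zeta^4 t^{4/b_{K,I}}\big),
\]
which is exactly the numerator of Equation~\eqref{eq:OnShellHilbSum}, completing the derivation.

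I do not expect a genuine obstacle here; the argument is a direct assembly of prior results. The only points meriting care are the verification that the $\nu\leq 0$ branch applies, so that no additional residue term is present, and the interchange of the limit $\bs{b}\to\bs{a}$ with the fixed polynomial prefactors and the finite double sum. The latter is justified because each of the two limits exists individually—the degenerate-case singularities being removable, as established in the discussion preceding Theorem~\ref{thrm:CovarUnivarHilbSer}—so that the limit of the combination equals the combination of the limits and the termwise grouping over the common denominator is legitimate.
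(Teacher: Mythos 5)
Your proof is correct and takes essentially the same route as the paper: the paper's own proof consists of noting that the relevant $\nu$-invariant is nonpositive under the stated hypotheses and then invoking Proposition~\ref{prop:OnShellHilb} together with Theorem~\ref{thrm:CovarUnivarHilbSer}. The details you supply---identifying the two series as the $L=0$ and $L=2$ covariant cases, checking both $\nu\leq 0$ conditions so that no residue-at-zero term appears, and combining termwise over the common $L$-independent denominator with the limit interchange justified by removability of the singularities---are precisely the steps the paper leaves implicit.
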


Using Corollary~\ref{cor:OnShellHilb}, the algorithm described in Section~\ref{subsec:CovarHilbSerUnivar} to
compute the Hilbert series $\Hilb_{(\C[V]\otimes V_L)^{\SL_2}}(t)$ of covariants can as well be applied to compute
the Hilbert series $\Hilb_V^{\on}(t)$ of $\R[M_0]$. This amounts to applying the covariant algorithm twice to compute
both $\Hilb_{\C[V\oplus V^\ast]^{\SL_2}}(t)$ and $\Hilb_{(\C[V\oplus V^\ast]\otimes V_2)^{\SL_2}}(t)$.

Now, recall from Section~\ref{subsec:CovarHilbSerLaurent} that $\gamma_m(V\oplus V^\ast)$ denotes the Laurent coefficients
of $\C[V\oplus V^\ast]^{\SL_2}$ and $\gamma_{m,2}(V\oplus V^\ast)$ denotes the Laurent coefficients of the covariants
$V\oplus  V^\ast\to V_2$. Assume $(V,\SL_2)$ is $1$-large so that $\dim (\C[V]^{\SL_2}) = D - 3$, see
\cite[Remark (9.2)(3)]{GWSlifting}. Using the Kempf-Ness homeomorphism \cite{GWSkempfNess}, the corresponding
symplectic $M_0$ quotient has real dimension $2D - 6$. Note that the algebra of invariants $\C[V\oplus  V^\ast]^{\SL_2}$
has dimension $2D - 3$ so that $\gamma_0(V\oplus V^\ast)$ and $\gamma_{0,2}(V\oplus V^\ast)$ occur in degree $2D - 3$.

Using Proposition~\ref{prop:OnShellHilb}, the Laurent expansion of $\Hilb_V^{\on}(t)$ begins as given below,
where each $\gamma_{m,L} = \gamma_{m,L}(V\oplus V^\ast)$ and $\gamma_m = \gamma_m(V\oplus V^\ast)$:
\begin{align}
    \label{eq:OnShellExpanN1N2}
    &\big( 6\gamma_0 - 2\gamma_{0,2} \big) (1 - t)^{4 - 2D}
    + \big( - 15 \gamma_0 + 5 \gamma_{0,2} + 6 \gamma_1 - 2 \gamma_{1,2} \big) (1 - t)^{5 - 2D}
    \\ \label{eq:OnShellExpanG0} &\quad\quad
    + \big( 20 \gamma_0 - 4 \gamma_{0,2} - 15 \gamma_1 + 5 \gamma_{1,2} + 6 \gamma_2 - 2 \gamma_{2,2} \big) (1 - t)^{6 - 2D}
    \\ \label{eq:OnShellExpanG1} &\quad\quad
    + \big( -15 \gamma_0 + \gamma_{0,2} + 20 \gamma_1 - 4 \gamma_{1,2} - 15 \gamma_2 + 5 \gamma_{2,2}
        + 6 \gamma_3 - 2 \gamma_{3,2} \big) (1 - t)^{7 - 2D}
    \\ \nonumber &\quad\quad
    + \big( 6 \gamma_0 - 15 \gamma_1 + \gamma_{1,2} + 20 \gamma_2 - 4 \gamma_{2,2} - 15 \gamma_3
        + 5 \gamma_{3,2} + 6 \gamma_4 - 2 \gamma_{4,2} \big) (1 - t)^{8 - 2D}
    + \cdots .
\end{align}
As the pole order of $\Hilb_V^{\on}(t)$ is equal to the dimension of $M_0$, the first
two of these coefficients \eqref{eq:OnShellExpanN1N2} are zero. One also checks using the expressions in Theorems~\ref{thrm:CovarGamma0}
and \ref{thrm:CovarGamma1} that this is the case, i.e. that
\[
    6\gamma_0(V\oplus V^\ast) - 2\gamma_{0,2}(V\oplus V^\ast)
    =
    - 15 \gamma_0(V\oplus V^\ast) +  5 \gamma_{0,2}(V\oplus V^\ast) + 6 \gamma_1(V\oplus V^\ast) - 2 \gamma_{1,2}(V\oplus V^\ast)
    =   0.
\]

Let $\sigma_V = 2$ if each $d_k$ is even and $1$ otherwise, and note that
$\sigma_V = \sigma_{V\oplus V^\ast}$. Recall that $\bs{a}_{V\oplus V^\ast}$ denotes the vector of positive weights of
the representation $V\oplus V^\ast$ so that $\bs{a}_{V\oplus V^\ast}$ is two copies of $\bs{a}_V$ concatenated.
Let $\widehat{\delta} = (2C-1, 2C-2, \ldots, 1, 0)$,
$\widehat{\rho} = (2C-3, 2C-3, 2C-3, 2C-4, \ldots, 1,0)$, and
$\widehat{\rho^\prime} = (2C-3, 2C-4, 2C-4, 2C-4, 2C-5, \ldots, 1,0)$, i.e. the respective partitions
$\delta$, $\rho$, and $\rho^\prime$ used in Section~\ref{sec:Covariants} corresponding to the
representation $V\oplus V^\ast$. We have the following.

\begin{theorem}
\label{thrm:OnShellHilbGam0}
Let $V = \bigoplus_{k=1}^r V_{d_k}$ be a unitary $\SU_2$-representation with $V^{\SU_2} = \{0\}$ and assume
that $V$ is not isomorphic to $V_d$ for $d=1,2,3,4$ nor $V_1\oplus V_2$.
Then the first nonzero Laurent coefficient $\gamma_0^{\on}(V)$ of the Hilbert series $\Hilb_V^{\on}(t)$
of the graded algebra $\R[M_0]$ of regular functions on the symplectic quotient $M_0$ is given by
\begin{equation}
\label{eq:OnShellHilbGam0}
    \gamma_0^{\on}(V)
    =
    8 \gamma_0(V\oplus V^\ast) + \frac{ 8 \sigma_V s_{\rho^\prime}(\bs{a}_{V\oplus V^\ast}) }
        { s_\delta(\bs{a}_{V\oplus V^\ast}) }
    =
    \frac{8\sigma_V \big( s_{\rho}(\bs{a}_{V\oplus V^\ast}) + s_{\rho^\prime}(\bs{a}_{V\oplus V^\ast}) \big) }
        { s_{\delta}(\bs{a}_{V\oplus V^\ast}) }.
\end{equation}
\end{theorem}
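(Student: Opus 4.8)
The plan is to extract $\gamma_0^{\on}(V)$ as the coefficient of $(1-t)^{6-2D}$ in line \eqref{eq:OnShellExpanG0} of the Laurent expansion, namely
\[
    \gamma_0^{\on}(V)
    = 20 \gamma_0 - 4 \gamma_{0,2} - 15 \gamma_1 + 5 \gamma_{1,2} + 6 \gamma_2 - 2 \gamma_{2,2},
\]
where all coefficients are those of the representation $W := V\oplus V^\ast$. The key observation making this tractable is that $W$ always contains $V$ and its dual, so every $d_k$ appears with even multiplicity; in particular $W$ always contains at least one copy of each $d_k$ twice, which places it safely outside the short exception lists of Theorems~\ref{thrm:CovarGamma0}, \ref{thrm:CovarGamma1}, and~\ref{thrm:CovarGamma2} once $V$ avoids the handful of cases excluded in the hypothesis. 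Moreover $W$ never falls into the ``$d_1 = 1$ with all other $d_k$ even'' case in a problematic way: since weights come in $\pm$ pairs and $V\oplus V^\ast$ doubles the odd part, the relevant parity dichotomy for $W$ is simply whether all $d_k$ of $V$ are even (giving $\sigma_V = 2$, Case~II of each theorem) or at least one is odd (giving $\sigma_V = 1$, Case~I).

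First I would substitute $L = 2$ into Theorems~\ref{thrm:CovarGamma0}, \ref{thrm:CovarGamma1}, and~\ref{thrm:CovarGamma2}. In Case~I (at least one $d_k$ odd) these give $\gamma_{0,2} = 3\gamma_0$, $\gamma_{1,2} = 3\gamma_1$, and $\gamma_{2,2} = 3\gamma_2 - 4 s_{\widehat{\rho^\prime}}(\bs{a}_W)/s_{\widehat\delta}(\bs{a}_W)$, where I have put $L=2$ into $L(L+1)(L+2)/6 = 4$. In Case~II (all $d_k$ even) I get $\gamma_{0,2} = 3\gamma_0$, $\gamma_{1,2} = 3\gamma_1$ again (since $1+(-1)^2 = 2$), and $\gamma_{2,2} = 2\bigl(\tfrac32\gamma_2 - 4 s_{\widehat{\rho^\prime}}(\bs a_W)/s_{\widehat\delta}(\bs a_W)\bigr) = 3\gamma_2 - 8 s_{\widehat{\rho^\prime}}(\bs a_W)/s_{\widehat\delta}(\bs a_W)$. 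Uniformly, $\gamma_{0,2} = 3\gamma_0$, $\gamma_{1,2} = 3\gamma_1$, and $\gamma_{2,2} = 3\gamma_2 - 4\sigma_V\, s_{\widehat{\rho^\prime}}(\bs a_W)/s_{\widehat\delta}(\bs a_W)$, using $\sigma_V = 1$ or $2$ to absorb the case distinction.

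Next I would substitute these three identities into the expression for $\gamma_0^{\on}(V)$. The invariant combinations collapse: $20\gamma_0 - 4(3\gamma_0) = 8\gamma_0$, $-15\gamma_1 + 5(3\gamma_1) = 0$, and $6\gamma_2 - 2(3\gamma_2) = 0$, so the only surviving terms are $8\gamma_0$ together with $-2 \cdot \bigl(-4\sigma_V\, s_{\widehat{\rho^\prime}}/s_{\widehat\delta}\bigr) = 8\sigma_V\, s_{\widehat{\rho^\prime}}(\bs a_W)/s_{\widehat\delta}(\bs a_W)$. This yields the first equality in~\eqref{eq:OnShellHilbGam0}. For the second equality I would invoke the Schur-polynomial formula $\gamma_0 = \sigma_V\, s_{\widehat\rho}(\bs a_W)/s_{\widehat\delta}(\bs a_W)$ from Theorem~\ref{thrm:CovarGamma0} applied to $W$ (noting $\gamma_0(W) = \sigma_W\, s_{\widehat\rho}/s_{\widehat\delta}$ with the understanding that the doubled weights make $\sigma_W = \sigma_V$, and absorbing the factor-of-$2$ convention of Case~II into $\sigma_V$ consistently with how $\gamma_{0,2}$ was computed), so that $8\gamma_0 = 8\sigma_V\, s_{\widehat\rho}/s_{\widehat\delta}$ and the two Schur terms combine over the common denominator $s_{\widehat\delta}(\bs a_W)$.

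The main obstacle I expect is bookkeeping the parity conventions and the factor of $\sigma_V$ so that the Case~I and Case~II formulas genuinely unify under a single expression; the $1+(-1)^L$ prefactors and the doubling of $\gamma_0,\gamma_1,\gamma_2$ in the even case interact with the coefficient $-2$ on $\gamma_{2,2}$ in a way that must be tracked carefully to confirm the clean cancellations advertised after~\eqref{eq:OnShellExpanN1N2}. A secondary point needing care is verifying that $W = V\oplus V^\ast$ indeed avoids all exception cases of Theorem~\ref{thrm:CovarGamma2} whenever $V$ satisfies the stated hypothesis; this is where I would confirm that the excluded list for $V$ in the theorem statement is exactly what is forced by pushing the $\gamma_{2,2}$ exception list back through the doubling $V\mapsto V\oplus V^\ast$.
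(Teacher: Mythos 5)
Your proposal reproduces the paper's proof essentially verbatim: extract $\gamma_0^{\on}(V)$ from the coefficient in \eqref{eq:OnShellExpanG0}, specialize Theorems~\ref{thrm:CovarGamma0}, \ref{thrm:CovarGamma1}, and \ref{thrm:CovarGamma2} at $L=2$ to $W = V\oplus V^\ast$ (which avoids all the exception lists), obtain the uniform identities $\gamma_{0,2}=3\gamma_0$, $\gamma_{1,2}=3\gamma_1$, $\gamma_{2,2}=3\gamma_2 - 4\sigma_V s_{\widehat{\rho^\prime}}(\bs{a}_{V\oplus V^\ast})/s_{\widehat{\delta}}(\bs{a}_{V\oplus V^\ast})$, cancel to get $8\gamma_0 + 8\sigma_V s_{\widehat{\rho^\prime}}/s_{\widehat{\delta}}$, and finish with the Schur-polynomial expression for $\gamma_0$ --- exactly the paper's argument. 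The only slight slip is in your last paragraph, where you attribute the exclusion of $V_1\oplus V_2$ to pushing the $\gamma_{2,2}$ exception list through the doubling $V\mapsto V\oplus V^\ast$: its double $2V_1\oplus 2V_2$ is not on those lists, and that exclusion is instead required for $1$-largeness of $(V,\SL_2)$, without which Proposition~\ref{prop:OnShellHilb} (and hence the expansion \eqref{eq:OnShellExpanG0} you start from) is not valid.
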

\begin{proof}
Note that for all cases under consideration, $(V,\SL_2)$ is $1$-large, and $V\oplus V^\ast$ is not on the lists of
low-dimensional cases excluded in Theorems~\ref{thrm:CovarGamma0}, \ref{thrm:CovarGamma1}, and
\ref{thrm:CovarGamma2}. We then have by these three theorems that
\begin{align*}
    \gamma_{0,2}(V\oplus V^\ast)    &=  3\gamma_0(V\oplus V^\ast),
    \quad\quad\quad
    \gamma_{1,2}(V\oplus V^\ast)    =   3\gamma_1(V\oplus V^\ast),
    \quad\quad\mbox{and}
    \\
    \gamma_{2,2}(V\oplus V^\ast)    &=  3\gamma_2(V\oplus V^\ast)
                        - \frac{ 4\sigma_V s_{\widehat{\rho^\prime}}(\bs{a}_{V\oplus V^\ast}) }
                            { s_{\widehat{\delta}}(\bs{a}_{V\oplus V^\ast}) }.
\end{align*}
The expression for $\gamma_0^{\on}(V)$ in \eqref{eq:OnShellExpanG0} then reduces to
\begin{align*}
    \gamma_0^{\on}(V)
    =
    20 \gamma_0(V\oplus V^\ast) - 12 \gamma_0(V\oplus V^\ast) - 15 \gamma_1(V\oplus V^\ast) + 15 \gamma_1(V\oplus V^\ast)
        + 6 \gamma_2(V\oplus V^\ast) - 2 \gamma_{2,2}(V\oplus V^\ast).
\end{align*}
Equation~\eqref{eq:OnShellHilbGam0} then follows using the expressions for the $\gamma_m(V\oplus V^\ast)$
given in \cite[Theorem 1.1]{CayresPintoHerbigHerdenSeaton}.
\end{proof}

Finally, we observe that the value of $\gamma_{3,2}(V\oplus V^\ast)$ is determined by Equation~\eqref{eq:OnShellExpanG1}
and quantities computed above. Because the algebra of on-shell regular functions $\R[M_0]$ is
graded Gorenstein by \cite[Theorem 1.3]{HerbigSchwarzSeaton2}, we have by
\cite[Corollary 1.8]{HerbigHerdenSeaton} that $\gamma_1^{\on} = 0$; see also
\cite[Theorem 1.1]{HerbigHerdenSeaton2}. Then using the expression for $\gamma_1^{\on}$
in \eqref{eq:OnShellExpanG1} along with the facts that $\gamma_1(V\oplus V^\ast) = 3\gamma_0(V\oplus V^\ast)/2$ and
$\gamma_3(V\oplus V^\ast) = 5\big(\gamma_2(V\oplus V^\ast) - \gamma_0(V\oplus V^\ast)\big)/2$ from
\cite[Theorem 1.1]{CayresPintoHerbigHerdenSeaton}, we have
\[
    -15\gamma_0(V\oplus V^\ast) +  15\gamma_2(V\oplus V^\ast)
        - \frac{ 20 \sigma_V s_{\widehat{\rho^\prime}}(\bs{a}_{V\oplus V^\ast}) }
            { s_{\widehat{\delta}}(\bs{a}_{V\oplus V^\ast}) } - 2 \gamma_{3,2}(V\oplus V^\ast) = 0,
\]
i.e.
\[
    \gamma_{3,2}(V\oplus V^\ast)
    =
    \frac{-15}{2} \gamma_0(V\oplus V^\ast) + \frac{15}{2}\gamma_2(V\oplus V^\ast)
        - \frac{ 10 \sigma_V s_{\widehat{\rho^\prime}}(\bs{a}_{V\oplus V^\ast}) }
            { s_{\widehat{\delta}}(\bs{a}_{V\oplus V^\ast}) }.
\]

We conclude with some empirical observations. The data indicate that for fixed dimension of $M_0$, the irreducible representation has the smallest or second smallest $\gamma_0^{on}$ (second to $V_k \oplus V_1$ in some cases when
$\dim_\R M_0 = 0(\operatorname{mod}4$)), while the
largest $\gamma_0^{on}$ comes from the representation with the highest reducibility.
If $\dim_\R M_0=2(\operatorname{mod}4)$ the latter is $kV_1$ where $k={3\over 2}+{1\over 4}\dim_\R M_0$, while
for $\dim_\R M_0=0(\operatorname{mod} 4)$ it is $k V_1\oplus V_2$ with $k={1\over 4}\dim_\R M_0$. This suggests that $\gamma_0^{on}$ measures the degree of reducibility of the representation.
The observation that for $kV_1$ the denominator of $\gamma_0$ is a power of $2$ can be justified
using Equation~\eqref{eq:OnShellHilbGam0}, as $s_\delta$ is a product of sums of pairs of variables.
Among all representations with fixed $\dim_\R M_0$ the irreducible representation has by far the most
intricate Hilbert series.

We also note that, with the exception of the non-$1$-large representations $V_2$ and $2V_1$,
which correspond to graded regularly symplectomorphic symplectic quotients (see
Remark~\ref{rem:Non1Large}), the $\gamma_0^{\on}(V)$ are distinct for each $V$ with
$\dim_{\R} M_0 \leq 38$. This in particular implies that there are no graded regular
symplectomorphisms among these cases. Whether this is the case in arbitrary dimension,
and in particular whether $\gamma_0^{\on}(V)$ determines $V$ for $1$-large $V$,
will be considered in a future work.

\appendix

\section{Hilbert Series in Low Dimensions}
\label{app:HSerData}

For the benefit of those readers interested in explicit descriptions of the algebra $\R[M_0]$,
we present in Table~\ref{tab:HilbM0LowDim} the Hilbert series $\Hilb_V^{\on}(t)$
of $\R[M_0]$ for symplectic quotients corresponding to $\SU_2$-modules of dimension at most $10$.
The first three cases $V_1$, $V_2$, and $2V_1$ are determined using the identification of $M_0$ with
orbifolds, see \cite[Section 5]{HerbigSchwarzSeaton} or \cite{ArmsGotayJennings,GotayBos}. The
other cases were computed using the algorithm based on Corollary~\ref{cor:OnShellHilb}
described in Section~\ref{sec:SympQuots}, which has been implemented on
\emph{Mathematica} \cite{Mathematica} and is available from the authors upon request.
The time to compute $\Hilb_V^{\on}(t)$ a PC varies widely even for representations
of the same dimension; the most time-consuming example presented here was $3V_1\oplus V_3$,
which took $107$ minutes, while $V_9$ was computed in 29 seconds and $5V_1$ was computed in
less than $2$ seconds.

When space prohibits the ordinary expression of a rational function, we use the following
abbreviated notation for the numerator. The expression $\{a_0, a_1, a_2, \ldots a_k; \; n \}$
indicates the palindromic polynomial of total degree $n$ that begins
\[
    a_0 + a_1 t + a_2 t^2 + \cdots + a_k t^k + \cdots .
\]
Note that this could either refer to
\[
    a_0 + a_1 t + a_2 t^2 + \cdots + a_k t^k + a_{k-1} t^{k+1} + a_{k-2} t^{k+2}
        + \cdots + a_0 t^n
\]
or
\[
    a_0 + a_1 t + a_2 t^2 + \cdots + a_k t^k + a_k t^{k+1} + a_{k-1} t^{k+2}
        + \cdots + a_0 t^n;
\]
this ambiguity is removed by indicating the value of $n$.

\begin{longtable}[H]{|c|c|c|c|c|}
 \hline
 $\bs{V}$   &   $\bs{\dim_{\R} M_0}$  &   $\bs{\Hilb_V^{\on}(t)}$    &   $\bs{\gamma_0^{on}}$  &   $\bs{\gamma_2^{on}}$
 \endhead
 \hline \multicolumn{5}{|c|}{$\bs{\dim_{\C} V = 2}$}
 \\
 \hline
  $V_1$         &       $0$         &
  $1$           &       $1$         &       $0$
 \\
 \hline \multicolumn{5}{|c|}{$\bs{\dim_{\C} V = 3}$}
 \\
 \hline
  $V_2$         &       $2$         &
  $\frac{1 + t^2}{(1 - t^2)^2}$ &
  $\frac{1}{2}$ &
  $\frac{1}{8}$
 \\
 \hline \multicolumn{5}{|c|}{$\bs{\dim_{\C} V = 4}$}
 \\
 \hline
  $2V_1$        &       $2$         &
  $\frac{1 + t^2}{(1 - t^2)^2}$ &
  $\frac{1}{2}$ &
  $\frac{1}{8}$
 \\
 \hline
  $V_3$         &       $2$         &
  $\frac{1+t^4}{(1-t^2)(1-t^4)}$ &
  $\frac{1}{4}$ &
  $\frac{5}{16}$
 \\
 \hline \multicolumn{5}{|c|}{$\bs{\dim_{\C} V = 5}$}
 \\
 \hline
 $V_4$         &       $4$         &
  $\frac{1+t^2+2 t^3+t^4+t^6}{(1-t^2)^2 (1-t^3)^2}$ &
  $\frac{1}{6}$ &
  $\frac{1}{8}$
 \\
 \hline
 $V_1\oplus V_2$         &       $4$         &
  $\frac{1+2 t^2+2 t^3+2 t^4+t^6}{(1-t^2)^2 (1-t^3)^2}$ &
  $\frac{2}{9}$ &
  $\frac{11}{108}$
 \\
 \hline \multicolumn{5}{|c|}{$\bs{\dim_{\C} V = 6}$}
 \\
 \hline
 $3V_1$         &       $6$         &
  $\frac{1+9 t^2+9 t^4+t^6}{(1-t^2)^6}$ &
  $\frac{5}{16}$ &
  $\frac{3}{64}$
 \\
 \hline
 $2V_2$         &       $6$         &
  $\frac{1+4 t^2+4 t^4+t^6}{ (1-t^2)^6}$ &
  $\frac{5}{32}$ &
  $\frac{11}{128}$
 \\
 \hline
 $V_1\oplus V_3$         &       $6$         &
  $\frac{1 + 12 t^4 + 13 t^6 + 13 t^8 + 12 t^{10} + t^{14} }
    { (1-t^2)^2 (1-t^4)^4 }$ &
  $\frac{13}{256}$ &
  $\frac{27}{1024}$
 \\
 \hline
 $V_5$         &       $6$         &
  $\frac{ \{1, 0, 0, 0, 2, 0, 1, 0, 14, 0, 13, 0, 29, 0, 16; \; 28 \} }
    {(1-t^2) (1-t^4) (1-t^6)^2 (1-t^8)^2}$ &
  $\frac{17}{2304}$ &
  $\frac{113}{27648}$
 \\
 \hline \multicolumn{5}{|c|}{$\bs{\dim_{\C} V = 7}$}
 \\
 \hline
 $2V_1\oplus V_2$         &       $8$         &
  $\frac{  \{1, 0, 5, 14, 13, 22, 34; \; 12\}  }
    {(1-t^2)^4 (1-t^3)^4}$ &
  $\frac{1}{9}$ &
  $\frac{13}{324}$
 \\
 \hline
 $V_2\oplus V_3$         &       $8$         &
  $\frac{ \{1, 0, 2, 2, 9, 15, 24, 36, 44, 57, 64; \; 21\} }
    {(1-t^2)^2 (1-t^3)^2 (1-t^4) (1-t^5)^3}$ &
  $\frac{127}{4500}$ &
  $\frac{721}{54000}$
 \\
 \hline
 $V_1\oplus V_4$         &       $8$         &
  $\frac{ \{{1,0,2,0,3,6,16,12,19,18; \; 18}\}}
    {(1-t^2)^2 (1-t^3)^4 (1-t^5)^2}$ &
  $\frac{34}{2025}$ &
  $\frac{221}{24300}$
 \\
 \hline
 $V_6$         &       $8$         &
  $\frac{\scriptsize\begin{array}{c}\{1, 0, 2, 0, 5, 0, 17, 8, 38, 25, 71, 64, 120, 125, 177, \\
     195, 240, 252, 299, 295, 316; \; 41 \}\end{array}}
  {(1-t^2)(1-t^4)^3 (1-t^6) (1-t^9) (1 - t^{10})^2}$ &
  $\frac{5}{768}$ &
  $\frac{17}{5120}$
 \\
 \hline \multicolumn{5}{|c|}{$\bs{\dim_{\C} V = 8}$}
 \\
 \hline
 $4V_1$         &       $10$         &
  $\frac{1+18 t^2+65 t^4+65 t^6+18 t^8+t^{10}}{(1-t^2)^{10}}$ &
  $\frac{21}{128}$ &
  $\frac{21}{512}$
 \\
 \hline
 $V_1\oplus 2V_2$         &       $10$         &
  $\frac{  \{1, 0, 5, 8, 24, 28, 48, 44; \; 14\} }
    {(1-t^2)^6 (1-t^3)^4}$ &
  $\frac{17}{324}$ &
  $\frac{167}{7776}$
 \\
 \hline
 $2V_1\oplus V_3$         &       $10$         &
  $\frac{  \{1, 0, 2, 0, 59, 0, 89, 0, 340, 0, 240; 20 \} }
    {(1-t^2)^5 (1-t^4)^5}$ &
  $\frac{611}{16384}$ &
  $\frac{1107}{65536}$
 \\
 \hline
 $2V_3$         &       $10$         &
  $\frac{  \{ 1, 0, 1, 0, 21, 0, 35, 0, 130, 0, 100; \;20\}}
    {(1-t^2)^5 (1-t^4)^5}$ &
  $\frac{119}{8192}$ &
  $\frac{215}{32768}$
 \\
 \hline
 $V_2\oplus V_4$         &       $10$         &
  $\frac{  \{ 1, 0, 0, 6, 13, 8, 19, 28; \; 14\} }
    { (1-t^2)^6 (1-t^3)^4}$ &
  $\frac{61}{2592}$ &
  $\frac{353}{31104}$
 \\
 \hline
 $V_1\oplus V_5$         &       $10$         &
  $\frac{\scriptsize\begin{array}{c}\{ 1, 0, 1, 0, 11, 0, 68, 0, 286, 0, 746, 0, 1820, 0, 3451, \\
    0, 5733, 0, 8042, 0, 9993, 0, 10532; \; 44 \} \end{array}}
    { (1-t^2) (1-t^4)^3 (1-t^6)^4 (1-t^8)^2 }$ &
  $\frac{5903}{884736}$ &
  $\frac{36461}{10616832}$
 \\
 \hline
 $V_7$         &       $10$         &
  $\frac{\scriptsize\begin{array}{c}\{ 1,0,1,0,4,0,17,0,100,0,301,0,967, 0, 2333, 0, 5291, 0, 10464, \\
    0, 19436, 0, 32516, 0, 51410, 0, 74928, 0, 103252, 0, 132793, \\
    0, 162204, 0, 185681, 0, 202349, 0, 207442; \; 76 \} \end{array} }
    {(1-t^4) (1-t^6)^2 (1-t^8)^2 (1-t^{10})^3 (1-t^{12})^2}$ &
  $\frac{1087769}{663552000}$ &
  $\frac{6309547}{7962624000}$
 \\
 \hline \multicolumn{5}{|c|}{$\bs{\dim_{\C} V = 9}$}
 \\
 \hline
 $3V_1\oplus V_2$         &       $12$         &
  $\frac{ \{ 1, 0, 12, 34, 62, 158, 297, 366, 486, 580; \; 18 \} }{ (1 - t^2)^6 (1 - t^3)^6 }$ &
  $\frac{853}{11664}$ &
  $\frac{1211}{46656}$
 \\
 \hline
 $3V_2$         &           $12$         &
  $\frac{ \{1, 0, 9, 14, 30, 48, 44; \; 12\} }
    { (1 - t^2)^{12} }$ &
  $\frac{31}{512}$ &
  $\frac{51}{2048}$
 \\
 \hline
 $V_1\oplus V_2\oplus V_3$         &    $12$         &
  $\frac{ \scriptsize\begin{array}{c} \{1, 0, 3, 15, 43, 106, 247, 510, 959, 1662, 2674, 3983, \\
        5578, 7281, 8962, 10378, 11329, 11644; \; 34 \}  \end{array} }
    { (1 - t^2)^2 (1 - t^3)^3 (1 - t^4)^3 (1 - t^5)^3 (1 - t^6) }$ &
  $\frac{6617}{288000}$ &
  $\frac{20803}{2073600 }$
 \\
 \hline
 $2V_1\oplus V_4$         &       $12$         &
  $\frac{ \scriptsize\begin{array}{c} \{1, 0, 7, 0, 27, 64, 177, 308, 619, 1036, 1692, 2618, 3715, \\
        4950, 6311, 7664, 8632, 9348, 9614; \; 36\}  \end{array} }
        { (1 - t^2)^2 (1 - t^3)^4 (1 - t^5)^4 (1 - t^6)^2 }$ &
  $\frac{6497}{455625}$ &
  $\frac{12773}{1822500}$
 \\
 \hline
 $V_3\oplus V_4$         &        $12$         &
  $\frac{\scriptsize\begin{array}{c}\{  1, 0, 3, 2, 16, 31, 96, 196, 419, 739, 1285, 2018, 3106, 4453, 6190, \\
    8114, 10251, 12290, 14195, 15628, 16604, 16888; \; 42 \}\end{array}}
    {(1-t^2) (1-t^3)^4 (1-t^4) (1-t^5)^3 (1-t^7)^3}$ &
  $\frac{104081}{13891500}$ &
  $\frac{112691}{33339600}$
 \\
 \hline
 $V_2\oplus V_5$         &        $12$         &
  $\frac{ \scriptsize\begin{array}{c}\{ 1, 0, 4, 4, 18, 33, 103, 227, 527, 1088, 2201, 4159, 7564, 13162, \\
    22088, 35778, 56103, 85378, 126257, 181801, 255208, 349731, \\
    468381, 613621, 787245, 989611, 1220152, 1476055, 1753528,  \\
    2046090, 2346792, 2646000, 2934505, 3201233, 3436754, \\
    3630791, 3776029, 3865470, 3895992; \; 76  \} \end{array} }
    {  (1 - t^5)^3 (1 - t^6)^2 (1 - t^7)^3 (1 - t^8)^2 (1 - t^{12})^2  }$ &
  $\frac{4785211}{889056000}$ &
  $\frac{27426803}{10668672000}$
 \\
 \hline
 $V_1\oplus V_6$         &        $12$         &
  $\frac{\scriptsize\begin{array}{c} \{1, 0, 4, 0, 13, 14, 63, 116, 295, 564, 1161, 2020, 3683, 5916, 9678, \\
    14566, 21837, 30762, 42930, 56848, 74413, 93114, 114990, 136452, \\
    159818, 180478, 201079, 216702, 230366, 237126, 241006; \; 60 \} \end{array} }
        {(1-t^4)^2 (1-t^5)^4 (1-t^6)^3 (1-t^7)^2 (1-t^{12}) }$ &
  $\frac{244439}{79380000}$ &
  $\frac{2897051}{1905120000}$
 \\
 \hline
 $V_8$         &            $12$         &
  $\frac{\scriptsize\begin{array}{c} \{ 1, 0, 3, 3, 9, 18, 43, 84, 179,326,604,1015,1706,2655,4082, \\
    5914, 8367, 11262, 14751, 18428, 22410, 26071, 29490, \\
    32017, 33793, 34264; \; 50 \} \end{array} }
        {(1 - t^3) (1 - t^4)^3 (1 - t^5)^3 (1 - t^6)^3 (1 - t^7)^2}$ &
  $\frac{32909}{18144000}$ &
  $\frac{36809}{43545600}$
 \\
 \hline \multicolumn{5}{|c|}{$\bs{\dim_{\C} V = 10}$}
 \\
 \hline
 $5V_1$         &       $14$         &
  $\frac{ \{ {1, 0, 31, 0, 231, 0, 595, 0; \; 14} \} }{ (1 - t^2)^{14} }$ &
  $\frac{429}{4096}$ &
  $\frac{495}{16384}$
 \\
 \hline
 $2V_1\oplus 2V_2$         &       $14$         &
  $\frac{  \{ 1, 0, 10, 32, 98, 220, 488, 860, 1366, 1836, 2253, 2376; \; 22 \}  }
    { (1 - t^2)^6 (1 - t^3)^8 }$ &
  $\frac{29}{729}$ &
  $\frac{5}{324}$
 \\
 \hline
 $3V_1\oplus V_3$         &       $14$         &
  $\frac{  \{ 1, 0, 9, 0, 179, 0, 762, 0, 3375, 0, 6834, 0, 12999, 0, 13524; \; 28 \} }
    { (1 - t^2)^7 (1 - t^4)^7 }$ &
  $\frac{30921}{1048576}$ &
  $\frac{52659}{4194304}$
 \\
 \hline
 $2V_2\oplus V_3$         &       $14$         &
  $\frac{  \scriptsize\begin{array}{c}\{1, 0, 7, 9, 51, 134, 366, 784, 1593, 2947, 5199, 8400, 12830, \\
    18152, 24504, 31023, 37472, 42613, 46145, 47252; \; 38\}  \end{array}}
        { (1 - t^2)^4 (1 - t^3)^3 (1 - t^4) (1 - t^5)^5 (1 - t^6) }$ &
  $\frac{15991}{1012500}$ &
  $\frac{81041}{12150000}$
 \\
 \hline
 $V_1\oplus 2V_3$         &       $14$         &
  $\frac{  \{1, 0, 0, 0, 93, 0, 286, 0, 1569, 0, 2758, 0, 5901, 0, 5530; \; 28 \}  }
    { (1 - t^2)^7 (1 - t^4)^7 }$ &
  $\frac{13373}{1048576}$ &
  $\frac{23199}{4194304}$
 \\
 \hline
 $V_1\oplus V_2\oplus V_4$         &       $14$         &
  $\frac{  \{1, 0, 1, 10, 29, 68, 156, 268, 446, 724, 1015, 1214, 1406, 1500; \; 26 \} }
    { (1 - t^2)^6 (1 - t^3)^6 (1 - t^5)^2 }$ &
  $\frac{761}{72900}$ &
  $\frac{2789}{583200}$
 \\
 \hline
 $2V_4$         &       $14$         &
  $\frac{  \{1, 0, 2, 14, 17, 24, 92, 154, 161, 234, 306; \; 20 \} }
    { (1 - t^2)^8 (1 - t^3)^6 }$ &
  $\frac{71}{7776}$ &
  $\frac{125}{31104}$
 \\
 \hline
 $2V_1\oplus V_5$         &       $14$         &
  $\frac{ \scriptsize\begin{array}{c} \{1, 0, 6, 0, 46, 0, 454, 0, 2849, 0, 12140, 0, 43131, 0, 127076, \\
    0, 315389, 0, 673304, 0, 1260139, 0, 2076447, 0, 3042040, \\
    0, 3982739, 0, 4675695, 0, 4928416; \; 60\} \end{array} }
        {  (1 - t^2) (1 - t^4)^5 (1 - t^6)^6 (1 - t^8)^2 }$ &
  $\frac{1167229}{191102976}$ &
  $\frac{127411}{42467328}$
 \\
 \hline
 $V_3\oplus V_5$         &       $14$         &
  $\frac{  \scriptsize\begin{array}{c} \{ 1, 0, 0, 0, 40, 0, 235, 0, 1536, 0, 6245, 0, 22073, 0, 62288, 0, \\
    153198, 0, 322982, 0, 604168, 0, 1000931, 0, 1491320, \\
    0, 1998930, 0, 2427434, 0, 2672013, 0; \; 62 \} \end{array} }
    { (1 - t^2)^2 (1 - t^4)^4 (1 - t^6)^4 (1 - t^8)^4 }$ &
  $\frac{1793899}{452984832}$ &
  $\frac{9710395}{5435817984}$
 \\
 \hline
 $V_2\oplus V_6$         &       $14$         &
  $\frac{  \scriptsize\begin{array}{c} \{ 1, 0, 3, 0, 35, 34, 195, 318, 899, 1580, 3412, 5788, 10695, \\
    17170, 28357, 43056, 65617, 94006, 134421, 182888, \\
    246941, 320684, 411834, 511564, 628197, 748348, \\
    880927, 1009232, 1141267, 1258738, 1370691, \\
    1456394, 1528351, 1566260, 1584434; \; 68\} \end{array}  }
    { (1 - t^2)^3 (1 - t^4)^5 (1 - t^9)^4 (1 - t^{10})^2 }$ &
  $\frac{4463}{829440}$ &
  $\frac{25219}{9953280}$
 \\
 \hline
 $V_1\oplus V_7$         &       $14$         &
  $\frac{  \scriptsize\begin{array}{c} \{ 1, 0, 2, 0, 16, 0, 120, 0, 949, 0, 4484, 0, 19061, 0, 66638, 0, \\
    206241, 0, 563855, 0, 1399730, 0, 3161375, 0, 6596301, 0, \\
    12755465, 0, 23052381, 0, 39054709, 0, 62358940, 0, \\
    94039452, 0, 134429968, 0, 182432050, 0, 235602923, \\
    0, 289833599, 0, 340185063, 0, 381119164, 0, \\
    407976216, 0, 417274692; \; 100 \} \end{array} }
        { (1 - t^4) (1 - t^6)^4 (1 - t^8)^4 (1 - t^{10})^3 (1 - t^{12})^2 }$ &
  $\frac{2423496049}{1528823808000}$ &
  $\frac{184571731}{244611809280}$
 \\
 \hline
 $V_9$         &       $14$         &
  $\frac{  \scriptsize\begin{array}{c} \{ 1, 0, 1, 0, 10, 0, 42, 0, 334, 0, 1566, 0, 6958, 0, 25277, 0, 83391, \\
    0, 244771, 0, 662241, 0, 1652020, 0, 3858520, 0, 8466785, 0, \\
    17599687, 0, 34772336, 0, 65630156, 0, 118662007, 0, \\
    206217754, 0, 345216158, 0, 558013931, 0, 872420918, \\
    0, 1321591127, 0, 1942389147, 0, 2773434697, 0, \\
    3851193503, 0, 5206038355, 0, 6856598397, 0, \\
    8805135769, 0, 11032270863, 0, 13494344349, 0, \\
    16121552188, 0, 18820073941, 0, 21475826372, 0, \\
    23962298579, 0, 26149487910, 0, 27915236756, \\
    0, 29155789630, 0, 29795890397, 0; \; 154 \} \end{array} }
        { (1 - t^8)^2 (1 - t^{10})^3 (1 - t^{12})^4 (1 - t^{14})^3 (1 - t^{16})^2  }$ &
  $\frac{62728171711}{116530348032000}$ &
  $\frac{68283510691}{279672835276800}$
 \\
 \hline
\caption{Hilbert series of $\R[M_0]$ for symplectic quotients $M_0$ corresponding to low-dimensional $\SU_2$-modules $V$.}
\label{tab:HilbM0LowDim}
\end{longtable}


\section{Visualization of $\gamma_0^{\on}(V)$}
\label{app:Gam0Plot}

Here, we give three graphs of the values of $\gamma_0^{\on}(V)$ to illustrate its dependence
on the weights of $V$, the number of irreducible subrepresentations of $V$, and dimension of $M_0$.
In Figure~\ref{fig:2DGam0Dim}, the horizontal axis is the dimension of $M_0$ and the vertical
axis is $-\log \gamma_0^{\on}(V)$.
In Figure~\ref{fig:3DGam0SumDim}, the horizontal axes are the dimension of $M_0$
and the sum of the positive weights in $V$, while the vertical axis is $-\log\gamma_0^{\on}(V)$.
In Figure~\ref{fig:3DGam0IrrepsDim}, the horizontal axes are the dimension of $M_0$
and the number of irreducible subrepresentations of $V$, and the vertical axis is again
$-\log\gamma_0^{\on}(V)$.
All three graphs plot the value of $\gamma_0^{\on}(V)$ for every $V$ such that
$V^{\SU_2} = \{0\}$ and $\dim M_0 \leq 38$.

These values were computed with \emph{Mathematica} \cite{Mathematica} using the expression in
Theorem~\ref{thrm:OnShellHilbGam0}.
The Schur polynomials were computed by expressing them in terms of elementary symmetric
polynomials using the Jacobi-Trudi identities \cite[Theorem 4.5.1]{SaganSymGrp}, which
is much faster than via the definition as a quotient of alternants.

\begin{figure}[h!]
\includegraphics[scale=1.2]{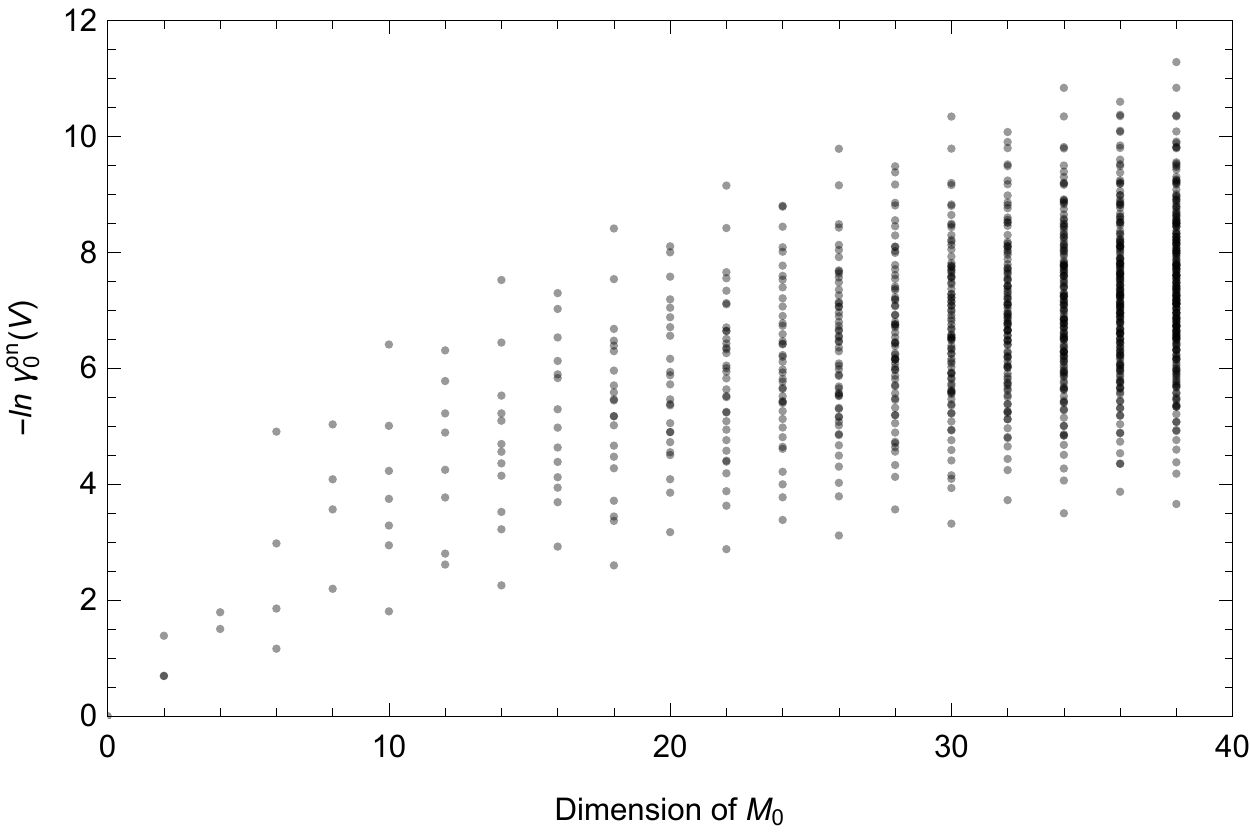}
\caption{A plot of the values of $-\ln\gamma_0^{\on}(V)$ vs. $\dim_{\R} M_0$.
Includes all $V$ with $V^{\SU_2}=\{0\}$ and $\dim_{\R} M_0 \leq 38$.
For fixed $\dim_{\R} M_0$, the largest or second largest $-\gamma_0^{on}$ comes from the irreducible representation. The smallest value of $-\gamma_0^{on}$ comes from the representation with $k V_1\oplus V_2$ if $\dim_{\R} M_0$ is divisible by four and $k V_1$ otherwise.}
\label{fig:2DGam0Dim}
\end{figure}

\begin{figure}[h!]
\includegraphics[scale=1.2]{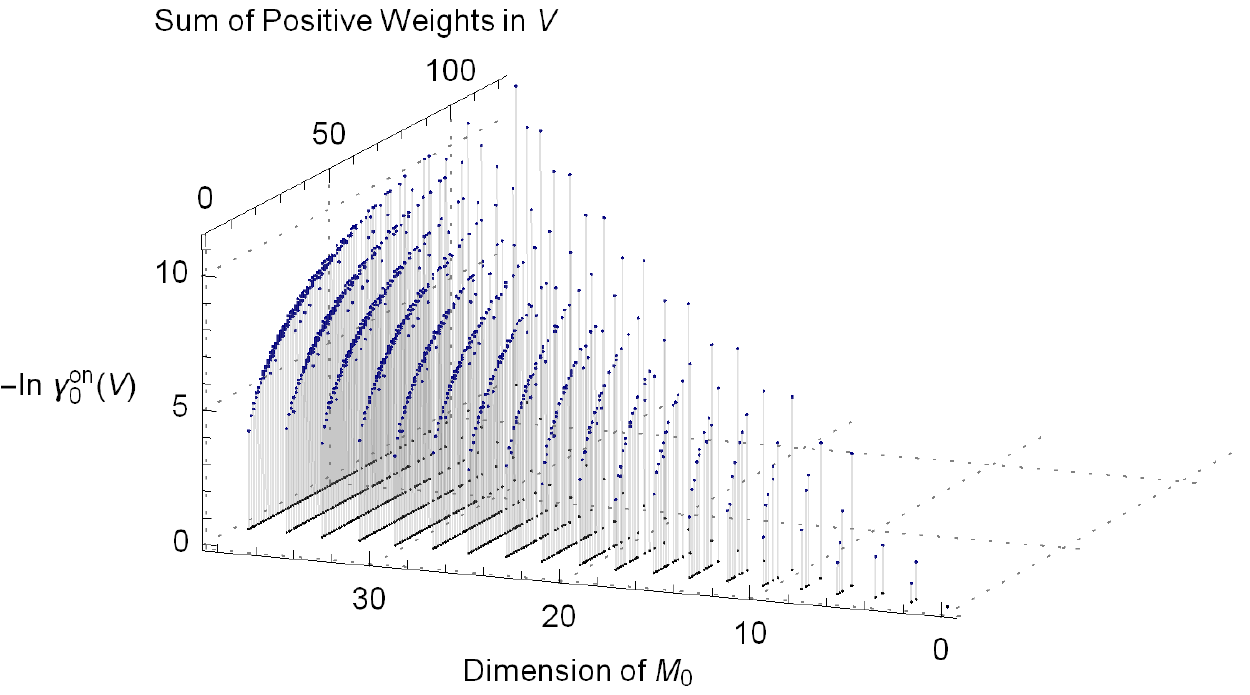}
\caption{A plot of the values of $-\ln\gamma_0^{\on}(V)$
vs. the sum of the positive weights in $V$ (i.e. elements of $\Lambda_V$) and $\dim_{\R} M_0$.
Includes all $V$ with $V^{\SU_2}=\{0\}$ and $\dim_{\R} M_0 \leq 38$.}
\label{fig:3DGam0SumDim}
\end{figure}

\begin{figure}[h!]
\includegraphics[scale=1.2]{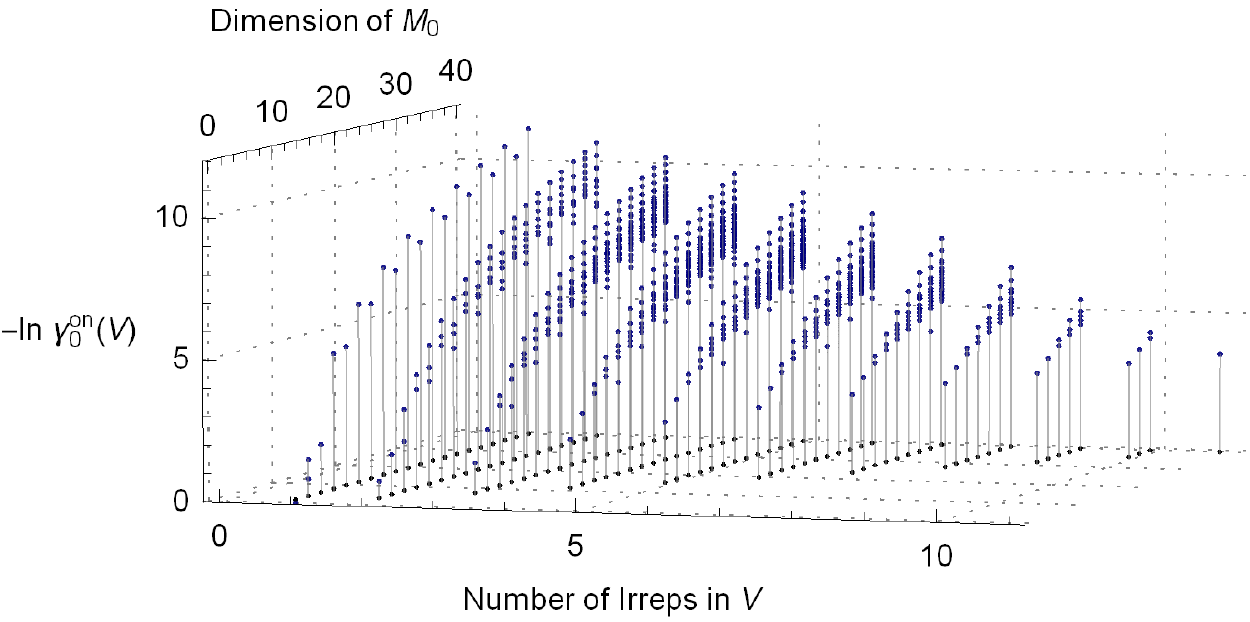}
\caption{A plot of the values of $-\ln\gamma_0^{\on}(V)$
vs. the number of irreducible summands in $V$ (i.e. $r$) and $\dim_{\R} M_0$.
Includes all $V$ with $V^{\SU_2}=\{0\}$ and $\dim_{\R} M_0 \leq 38$.}
\label{fig:3DGam0IrrepsDim}
\end{figure}


\bibliographystyle{amsplain}
\bibliography{HHS-SU2-onshell}

\def\cprime{$'$}
\providecommand{\bysame}{\leavevmode\hbox to3em{\hrulefill}\thinspace}
\providecommand{\MR}{\relax\ifhmode\unskip\space\fi MR }
\providecommand{\MRhref}[2]{%
  \href{http://www.ams.org/mathscinet-getitem?mr=#1}{#2}
}
\providecommand{\href}[2]{#2}
\begin{thebibliography}{10}

\bibitem{ArmsGotayJennings}
Judith~M. Arms, Mark~J. Gotay, and George Jennings, \emph{Geometric and
  algebraic reduction for singular momentum maps}, Adv. Math. \textbf{79}
  (1990), no.~1, 43--103.

\bibitem{AtiyahBott}
M.~F. Atiyah and R.~Bott, \emph{The {Y}ang-{M}ills equations over {R}iemann
  surfaces}, Philos. Trans. Roy. Soc. London Ser. A \textbf{308} (1983),
  no.~1505, 523--615.

\bibitem{BedratyukSL2MultigradPoincare}
L.~P. Bedratyuk, \emph{Poincar\'e series of the multigraded algebras of
  {$SL_2$}-invariants}, Ukrainian Math. J. \textbf{63} (2011), no.~6, 880--890.

\bibitem{BedratyukPoincareCovariants}
Leonid Bedratyuk, \emph{The {P}oincare series for the algebra of covariants of
  a binary form}, Int. J. Algebra \textbf{4} (2010), no.~25-28, 1201--1207.

\bibitem{BedratyukBivarPoincare}
\bysame, \emph{Bivariate {P}oincar\'e series for the algebra of covariants of a
  binary form}, ISRN Algebra (2011), Art. ID 312789, 11.

\bibitem{BedratyukIlashCovariants}
Leonid Bedratyuk and Nadia Ilash, \emph{The degree of the algebra of covariants
  of a binary form}, J. Commut. Algebra \textbf{7} (2015), no.~4, 459--472.

\bibitem{Brion}
Michel Brion, \emph{Invariants de plusieurs formes binaires}, Bull. Soc. Math.
  France \textbf{110} (1982), no.~4, 429--445.

\bibitem{BroerCM}
Bram Broer, \emph{Classification of {C}ohen-{M}acaulay modules of covariants
  for systems of binary forms}, Proc. Amer. Math. Soc. \textbf{120} (1994),
  no.~1, 37--45.

\bibitem{BroerCovariants}
\bysame, \emph{Hilbert series for modules of covariants}, Algebraic groups and
  their generalizations: classical methods ({U}niversity {P}ark, {PA}, 1991),
  Proc. Sympos. Pure Math., vol.~56, Amer. Math. Soc., Providence, RI, 1994,
  pp.~321--331.

\bibitem{BroerNewMethod}
\bysame, \emph{A new method for calculating {H}ilbert series}, J. Algebra
  \textbf{168} (1994), no.~1, 43--70.

\bibitem{BrunsHerzog}
Winfried Bruns and J{\"u}rgen Herzog, \emph{Cohen-{M}acaulay rings}, Cambridge
  Studies in Advanced Mathematics, vol.~39, Cambridge University Press,
  Cambridge, 1993.

\bibitem{CapeHerbigSeaton}
Joshua Cape, Hans-Christian Herbig, and Christopher Seaton, \emph{Symplectic
  reduction at zero angular momentum}, J. Geom. Mech. \textbf{8} (2016), no.~1,
  13--34.

\bibitem{CayresPintoHerbigHerdenSeaton}
Pedro de~Carvalho Cayres~Pinto, Hans-Christian Herbig, Daniel Herden, and
  Christopher Seaton, \emph{The {H}ilbert series of {SL}2-invariants},  (2017),
  arXiv:1710.02606 [math.RA].

\bibitem{CowieHerbigSeatonHerden}
L.~Emily Cowie, Hans-Christian Herbig, Daniel Herden, and Christopher Seaton,
  \emph{The {H}ilbert series and {$a$}-invariant of circle invariants}, J. Pure
  Appl. Algebra \textbf{223} (2019), no.~1, 395--421.

\bibitem{CrawleyBoeveyNormality}
William Crawley-Boevey, \emph{Normality of {M}arsden-{W}einstein reductions for
  representations of quivers}, Math. Ann. \textbf{325} (2003), no.~1, 55--79.

\bibitem{DerskenKemperBook}
Harm Derksen and Gregor Kemper, \emph{Computational invariant theory},
  Invariant Theory and Algebraic Transformation Groups, I, Springer-Verlag,
  Berlin, 2002, Encyclopaedia of Mathematical Sciences, 130.

\bibitem{DonaldsonKronheimer}
S.~K. Donaldson and P.~B. Kronheimer, \emph{The geometry of four-manifolds},
  Oxford Mathematical Monographs, The Clarendon Press, Oxford University Press,
  New York, 1990, Oxford Science Publications.

\bibitem{FarHerSea}
Carla Farsi, Hans-Christian Herbig, and Christopher Seaton, \emph{On orbifold
  criteria for symplectic toric quotients}, SIGMA Symmetry Integrability Geom.
  Methods Appl. \textbf{9} (2013), Paper 032, 18.

\bibitem{Hanany}
Bo~Feng, Amihay Hanany, and Yang-Hui He, \emph{Counting gauge invariants: the
  plethystic program}, J. High Energy Phys. (2007), no.~3, 090, 42.

\bibitem{Forger}
Michael Forger, \emph{Invariant polynomials and {M}olien functions}, J. Math.
  Phys. \textbf{39} (1998), no.~2, 1107--1141.

\bibitem{GinzburgNakajima}
Victor Ginzburg, \emph{Lectures on {N}akajima's quiver varieties}, Geometric
  methods in representation theory. {I}, S\'{e}min. Congr., vol.~24, Soc. Math.
  France, Paris, 2012, pp.~145--219.

\bibitem{GotayBos}
Mark~J. Gotay and Len Bos, \emph{Singular angular momentum mappings}, J.
  Differential Geom. \textbf{24} (1986), no.~2, 181--203.

\bibitem{GHJW}
K.~Guruprasad, J.~Huebschmann, L.~Jeffrey, and A.~Weinstein, \emph{Group
  systems, groupoids, and moduli spaces of parabolic bundles}, Duke Math. J.
  \textbf{89} (1997), no.~2, 377--412.

\bibitem{HerbigHerdenSeaton}
Hans-Christian Herbig, Daniel Herden, and Christopher Seaton, \emph{On
  compositions with {$x^2/(1-x)$}}, Proc. Amer. Math. Soc. \textbf{143} (2015),
  no.~11, 4583--4596.

\bibitem{HerbigHerdenSeaton2}
Hans-Christian Herbig, Daniel Herden, and Christopher Seaton, \emph{The
  {L}aurent coefficients of the {H}ilbert series of a {G}orenstein algebra}, to
  appear in Experimental Mathematics (2018).

\bibitem{HerbigSchwarz}
Hans-Christian Herbig and Gerald~W. Schwarz, \emph{The {K}oszul complex of a
  moment map}, J. Symplectic Geom. \textbf{11} (2013), no.~3, 497--508.

\bibitem{HerbigSchwarzSeaton}
Hans-Christian Herbig, Gerald~W. Schwarz, and Christopher Seaton, \emph{When is
  a symplectic quotient an orbifold?}, Adv. Math. \textbf{280} (2015),
  208--224.

\bibitem{HerbigSchwarzSeaton2}
\bysame, \emph{Symplectic quotients have symplectic singularities},  (2017),
  \texttt{arXiv:1706.02089 [math.SG]}.

\bibitem{HerbigSeaton}
Hans-Christian Herbig and Christopher Seaton, \emph{The {H}ilbert series of a
  linear symplectic circle quotient}, Exp. Math. \textbf{23} (2014), no.~1,
  46--65.

\bibitem{HerbigSeaton2}
\bysame, \emph{An impossibility theorem for linear symplectic circle
  quotients}, Rep. Math. Phys. \textbf{75} (2015), no.~3, 303--331.

\bibitem{HuebschmannCleaningUp}
Johannes Huebschmann, \emph{Singularities and {P}oisson geometry of certain
  representation spaces}, Quantization of singular symplectic quotients, Progr.
  Math., vol. 198, Birkh\H{a}user, Basel, 2001, pp.~119--135.

\bibitem{HuebschmannMemoirs}
\bysame, \emph{K\"{a}hler spaces, nilpotent orbits, and singular reduction},
  Mem. Amer. Math. Soc. \textbf{172} (2004), no.~814, vi+96.

\bibitem{LMS}
Eugene Lerman, Richard Montgomery, and Reyer Sjamaar, \emph{Examples of
  singular reduction}, Symplectic geometry, London Math. Soc. Lecture Note
  Ser., vol. 192, Cambridge Univ. Press, Cambridge, 1993, pp.~127--155.

\bibitem{MacdonaldSymFuncs}
I.~G. Macdonald, \emph{Symmetric functions and {H}all polynomials}, second ed.,
  Oxford Classic Texts in the Physical Sciences, With contribution by A. V.
  Zelevinsky and a foreword by Richard Stanley, Reprint of the 2008 paperback
  edition.

\bibitem{NakajimaDuke}
Hiraku Nakajima, \emph{Instantons on {ALE} spaces, quiver varieties, and
  {K}ac-{M}oody algebras}, Duke Math. J. \textbf{76} (1994), no.~2, 365--416.

\bibitem{SaganSymGrp}
Bruce~E. Sagan, \emph{The symmetric group}, second ed., Graduate Texts in
  Mathematics, vol. 203, Springer-Verlag, New York, 2001, Representations,
  combinatorial algorithms, and symmetric functions.

\bibitem{GWSkempfNess}
Gerald~W. Schwarz, \emph{The topology of algebraic quotients}, Topological
  methods in algebraic transformation groups ({N}ew {B}runswick, {NJ}, 1988),
  Progr. Math., vol.~80, Birkh\"auser Boston, Boston, MA, 1989, pp.~135--151.

\bibitem{GWSlifting}
\bysame, \emph{Lifting differential operators from orbit spaces}, Ann. Sci.
  \'Ecole Norm. Sup. (4) \textbf{28} (1995), no.~3, 253--305.

\bibitem{SjamaarLerman}
Reyer Sjamaar and Eugene Lerman, \emph{Stratified symplectic spaces and
  reduction}, Ann. of Math. (2) \textbf{134} (1991), no.~2, 375--422.

\bibitem{Sniatycki}
J.~\'{S}niatycki, \emph{Differential geometry of singular spaces and reduction
  of symmetry}, New Mathematical Monographs, vol.~23, Cambridge University
  Press, Cambridge, 2013.

\bibitem{StanleyInvarFinGrp}
Richard~P. Stanley, \emph{Invariants of finite groups and their applications to
  combinatorics}, Bull. Amer. Math. Soc. (N.S.) \textbf{1} (1979), no.~3,
  475--511.

\bibitem{Mathematica}
{Wolfram~Research}, \emph{{\sc Mathematica} edition: Version 11},  (2016),
  \texttt{http://www.wolfram.com/mathematica/}.

\end{thebibliography}

\end{document}